\newtheorem{theorem}{Theorem}[section]
\newtheorem{lemma}[theorem]{Lemma}
\newtheorem{remark}[theorem]{Remark} 
\newtheorem{definition}[theorem]{Definition}
\newtheorem{example}[theorem]{Example}
\newtheorem{question}[theorem]{Question}
\numberwithin{equation}{section}
\def\build#1_#2^#3{\mathrel{\mathop{\kern 0pt#1}\limits_{#2}^{#3}}}
\def\date {le\ {\the\day}\ \ifcase\month\or janvier
\or fevrier\or mars\or avril\or mai\or juin\or juillet\or
ao\^ut\or septembre\or octobre\or novembre
\or d\'ecembre\fi\ {\oldstyle\the\year}}
\font\fivegoth=eufm5 \font\sevengoth=eufm7 \font\tengoth=eufm10
\def\goth{\fam\gothfam\tengoth}
\def\smallsquare{\vbox{\hrule\hbox{\vrule height 1 ex\kern 1 ex\vrule}\hrule}}
\def\eps{{\varepsilon}}
\def\ens{{\enspace}}
\def\Card{{\rm Card}}
\def\cDC{{\mathcal{DC}}}
\newcommand{\Q}{{\mathbb Q}}
\newcommand{\R}{{\mathbb R}}
\newcommand{\Z}{{\mathbb Z}}
\newcommand{\N}{{\mathbb N}}
\newcommand{\cK}{{\mathcal K}}
\newcommand{\Qbar}{\bar{\Q}}
\DeclareMathOperator{\Gal}{Gal}
\DeclareMathOperator{\Norm}{N}
\newcommand{\bfX}{{\mathbf X}}
\newcommand{\bfb}{{\mathbf b}}
\newcommand{\bmmu}{{\bm {\mu}}}
\newcommand{\cM}{\mathcal{M}}
\newcommand{\cN}{\mathcal{N}}
\newcommand{\cL}{\mathcal{L}}
\newcommand{\scrS}{\mathscr{S}}
\definecolor{myblue}{rgb}{0.6, 0.9, 1}
\definecolor{mygreen}{rgb}{0,0,1}
\definecolor{orange}{rgb}{0.8,0,0.2}
\begin{document}

\title[Arithmetical properties of convergents]{Some arithmetical properties of convergents to algebraic numbers}     

\author{ Yann Bugeaud} 
\address{Universit\'e de Strasbourg, IRMA UMR 7501, 7, rue Ren\'e Descartes, 67084 Strasbourg  (France)}
\address{Institut universitaire de France} 
\email{bugeaud@math.unistra.fr}

\author{Khoa D.~Nguyen}
\address{
Khoa D.~Nguyen \\
Department of Mathematics and Statistics\\
University of Calgary\\
AB T2N 1N4, Canada
}
\email{dangkhoa.nguyen@ucalgary.ca}

\subjclass[2010] {Primary  11J68; Secondary 11J87}       
\keywords{Approximation to algebraic numbers, Schmidt Subspace Theorem, recurrence sequence, continued fraction.}   
\bigskip
\begin{abstract} 
Let $\xi$ be an irrational algebraic  
real number and  $(p_k / q_k)_{k \ge 1}$ denote
the sequence of its convergents. 
Let $(u_n)_{n \geq 1}$ be a non-degenerate linear recurrence sequence of integers, which is not a
polynomial sequence. 
We show that if the intersection of the sequences $(q_k)_{k \ge 1}$ and $(u_n)_{n \geq 1}$ is infinite, then 
$\xi$ is a quadratic number. 
We also discuss several arithmetical properties of the sequence $(q_k)_{k \ge 1}$. 
\end{abstract}
\maketitle

\section{introduction}

Let $\theta$ be an arbitrary irrational 
real number and $(p_k (\theta) / q_k (\theta))_{k \ge 1}$ 
(we will use the shorter notation $p_k / q_k$ when no confusion is possible 
and $\xi$ instead of $\theta$
if the number is known to be algebraic) denote     
the sequence of its convergents.

Let $\cN$ be an infinite set of 
positive integers. It follows from a result of Borosh and Fraenkel \cite{BoFr72} that the set 
$$
\cK (\cN) = \{ \theta \in \R : \hbox{$q_k (\theta)$ is in $\cN$ for arbitrarily large $k$} \}
$$
has always Hausdorff dimension at least $1/2$ and its Lebesgue measure is zero if there is some 
positive $\delta$ such that the series $\sum_{q \in \cN} q^{-1 + \delta}$ converges. 
Examples of sets $\cN$ (or integer sequences $(u_n)_{n \ge 1}$) with the latter property include 
non-degenerate linear recurrence sequences, the set of integers 
having a bounded number of nonzero digits in their base-$10$ representation, 
sets of positive values taken at integer values by a given integer polynomial of degree at least $2$, and 
sets of positive integers divisible only by prime numbers from a given, finite set.

Our main purpose is to discuss whether $\cK (\cN)$ contains algebraic numbers for 
some special sets $\cN$ for which $\cK (\cN)$ has zero Lebesgue measure. 
Said differently, for an arbitrary irrational  
real algebraic number $\xi$, we investigate 
various arithmetical properties of the 
sequence 
$(q_k (\xi))_{k \ge 1}$. We consider the following questions:
\begin{itemize}
\item [A.] Does the greatest prime factor of $q_k (\xi)$ tends to infinity with $n$? If yes, how rapidly? 

\item [B.] Does the number of nonzero digits in the 
base-$10$   
representation of $q_k (\xi)$ tends to infinity with $n$? If yes, how rapidly? 

\item [C.] Are there infinitely many squares (cubes, perfect powers) in $(q_k (\xi))_{k \ge 1}$?

\item [D.] Is the intersection of $(q_k (\xi))_{k \ge 1}$ with a given linear recurrence sequence of integers finite or infinite?    
\end{itemize}

First, let us recall that very few is known on the continued fraction expansion of an algebraic number of degree at least $3$, while 
the continued fraction expansion of a quadratic real number $\xi$ is ultimately periodic and takes the form 
$$
\xi  = [a_0 ; a_1, \ldots , a_r, \overline{a_{r+1}, \ldots , a_{r+s}}]. 
$$
Consequently, we have $q_{k +2s} = t q_{k+s} - (-1)^{s} q_k$, for $k > r$, where $t$ is the trace of 
$$
\begin{pmatrix} a_{r+1} & 1 \\ 1 & 0 \end{pmatrix} \begin{pmatrix} a_{r+2} & 1 \\ 1 & 0 \end{pmatrix} \ldots 
\begin{pmatrix}a_{r+s} & 1 \\ 1 & 0 \end{pmatrix};
$$
see \cite{JaLi88,Ki82}. This shows that $(q_k (\xi))_{k \ge 1}$ is the union of $s$ binary recurrences whose roots are the 
roots of the polynomial $X^2 - t X + (-1)^s$, that is, the real numbers $(t \pm \sqrt{t^2 - 4 (-1)^s})/2$. 
Thus, for a quadratic real number $\xi$, we immediately derive Diophantine results on $(q_k (\xi))_{k \ge 1}$ 
from results on binary recurrences of the above form.

Question A has already been discussed in \cite{Bu09} and earlier works. Let us mention that it easily follows from Ridout's theorem \cite{Rid57} that the 
greatest prime factor of $q_k (\xi)$ tends to infinity with $n$, but we have no estimate of the rate of growth, except 
when $\xi$ is quadratic (by known effective results on binary recurrences, see \cite{Ste13b}).       
Furthermore, the theory of linear forms in logarithms gives a lower bound for the greatest prime 
factor of the product $p_k (\xi) q_k (\xi)$, which tends to infinity at least as fast as 
some constant times $\log_2 q_k(\xi) \log_3 q_k (\xi) / \log_4 q_k(\xi)$, where $\log_j$ denotes the 
$j$-th iterated logarithm function.  Although we have no new  contribution to Question A 
as stated for algebraic numbers $\xi$, we obtain new results on prime factors of $q_k(\theta)$   
for a transcendental number $\theta$. In 1939, Erd\H os and Mahler \cite{ErMa39} 
proved that the greatest prime factor 
of $q_{k-1}(\theta)q_k(\theta)q_{k+1}(\theta)$
tends to infinity as $k$ tends to infinity. In this paper, we obtain a more explicit result involving 
the irrationality exponent of $\theta$.

Question B has not been investigated up to now and we show how the $p$-adic Schmidt Subspace Theorem 
allows us to make some contribution. Question C is solved when $\xi$ is quadratic: 
there are only finitely many perfect powers in the sequence $(q_k(\xi))_{k \ge 1}$ thanks to results of Peth\H o \cite{Pet82} 
and Shorey and Stewart \cite{ShSt83} stating that there are only finitely many perfect 
powers in binary recurrence sequences of integers. This result is effective. When $\xi$ has degree at least $3$,  Question C appears to be very difficult. Since $(n^d)_{n\geq 1}$ is a linear recurrence sequence 
for any given positive integer $d$, a large part of Question C is contained in Question D.

Question D is interesting for several reasons. 
First, some assumption on the linear recurrence must be added, since the linear 
recurrence $(n)_{n \ge 1}$ has infinite intersection with the sequence $(q_k (\xi))_{k \ge 1}$. 
Second, as already mentionned, when $\xi$ is quadratic, its continued fraction expansion is ultimately 
periodic and the sequence $(q_k (\xi))_{k \ge 1}$ is the union of a finite set 
of binary recurrences. 
Among our results, we show that if a ``non-singular'' linear recurrence has an infinite intersection with 
$(q_k (\xi))_{k \ge 1}$, then $\xi$ must be quadratic. 
Unfortunately our results exclude sequences of the form $(n^d)_{n\geq 1}$, hence we 
do not have any contribution to Question C.

Recall that any non-zero linear recurrence sequence $(u_n)_{n\geq 1}$ of complex numbers can be expressed as
$$
u_n=P_1(n)\alpha_1^n+\cdots+P_r(n)\alpha_r^n\ \text{for $n\geq 1$},
$$
where $r \geq 1$, $\alpha_1,\ldots,\alpha_r$ are distinct non-zero complex numbers
(called the roots of the recurrence), and $P_1,\ldots,P_r$ 
are non-zero polynomials with complex coefficients. This expression is unique up to rearranging the terms. 
The sequence $(u_n)_{n\geq 1}$ is called non-degenerate if $\alpha_i/\alpha_j$ is not a root of unity for $1\leq i\neq j\leq r$. 
For most problems about linear recurrence sequences, it is harmless to assume that $(u_n)_{n\geq 1}$ is non-degenerate.  
Indeed, if $(u_n)_{n\geq 1}$ is degenerate and $L$ denotes 
the lcm of the orders of the roots of unity of the form $\alpha_i/\alpha_j$, then each of the subsequences 
$(u_{nL+m})_{n\geq 1}$ with $m\in\{0,\ldots,L-1\}$ is either identically zero or non-degenerate.

Let $(u_n)_{n \ge 1}$ be a non-constant linear recurrence sequence with integral roots 
greater than $1$ and rational coefficients. 
It follows from \cite[Theorem 4.16]{CZ18}  that the intersection of the 
sequences $(u_n)_{n \ge 1}$ and $(q_k)_{k \ge 1}$ is finite. 
This gives a first partial result toward Question D. 
We extend it to more general linear recurrence sequences: 

\begin{theorem}   \label{th:Main}
Let $(p_k/q_k)_{k \ge 1}$ be the sequence of convergents to a real algebraic number $\xi$ of degree at least $3$. 
Let $(u_n)_{n \geq 1}$ be a non-degenerate linear recurrence sequence of integers, which is not a
polynomial sequence. 
Then there are only finitely many pairs $(n,k)$ such that $u_n=q_k$.
\end{theorem}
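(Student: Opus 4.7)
The plan is to assume the intersection is infinite and derive a contradiction via the Schmidt Subspace Theorem, in the spirit of Corvaja--Zannier: the Subspace Theorem produces a non-trivial linear relation among the algebraic quantities built from $(u_n)$, and unravelling this relation forces $\xi$ to be either rational or quadratic, contradicting $\deg\xi\geq 3$.

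\emph{Setup.} Assume there is an infinite sequence of pairs $(n_j,k_j)$ with $u_{n_j}=q_{k_j}$. Write $u_n=\sum_{i=1}^{r}P_i(n)\alpha_i^n$, ordered so that $|\alpha_1|\geq\cdots\geq|\alpha_r|$. Since $q_k$ grows exponentially (at rate at least the golden ratio), while any non-degenerate LRS with all roots of modulus $\leq 1$ grows at most polynomially, we must have $|\alpha_1|>1$. Let $K$ be a number field containing $\xi$ and every $\alpha_i$, and let $S$ be a finite set of places of $K$ including all archimedean places and all finite places at which some $\alpha_i$ has non-trivial valuation; then each $\alpha_i$ is an $S$-unit of $K$.

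\emph{Schmidt Subspace Theorem.} Form the vectors $\mathbf{x}_j\in K^{N+1}$ whose components are $p_{k_j}$ together with the monomials $n_j^\ell \alpha_i^{n_j}$ indexing the expansion of $u_n$, so that $u_{n_j}$ is a prescribed $K$-linear combination of the $\alpha$-components of $\mathbf{x}_j$. At each $v\in S$ pick $N+1$ linearly independent linear forms over $K$: at the archimedean place one of them is $L^{\infty}(\mathbf{x})=\xi\cdot(\text{combination giving }u_n)-X_0$ with $X_0$ the $p_k$-coordinate, taking on $\mathbf{x}_j$ the value $\xi u_{n_j}-p_{k_j}$, which the convergent inequality bounds by $1/u_{n_j}$; every remaining form at every place is a coordinate projection. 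The $S$-unit property of the $\alpha_i$'s and the product formula then give
$$\prod_{v\in S}\prod_{i=0}^{N}\frac{|L_i^{v}(\mathbf{x}_j)|_v}{\|\mathbf{x}_j\|_v}\;\ll\; H(\mathbf{x}_j)^{-\varepsilon}$$
for a suitable $\varepsilon>0$, the saving coming from the small archimedean factor.

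\emph{Extracting a relation, and conclusion.} Schmidt's theorem places infinitely many $\mathbf{x}_j$ in a single proper linear subspace of $K^{N+1}$, which amounts to a non-trivial identity $c_0\,p_{k_j}+\sum_{i,\ell}c_{i\ell}\,n_j^\ell\alpha_i^{n_j}=0$ for infinitely many $j$. The Skolem--Mahler--Lech theorem combined with the non-degeneracy of $(u_n)$ forces $c_0\neq 0$; setting $w(n):=-c_0^{-1}\sum_{i,\ell}c_{i\ell}n^\ell\alpha_i^n$, one has $p_{k_j}=w(n_j)$ infinitely often, and combining with the convergent bound yields $|w(n_j)-\xi u_{n_j}|<1/u_{n_j}$ infinitely often. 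The left-hand side is itself an LRS in $n$; matching dominant coefficients shows either that $w(n)\equiv\xi u_n$ identically (forcing $p_{k_j}/q_{k_j}=\xi$, hence $\xi\in\bQ$, a contradiction) or that the $\alpha_i$'s must satisfy product-of-two-equals-$\pm 1$ relations characteristic of the recurrences recalled in the introduction for quadratic $\xi$. In the latter case the matrix formulas there yield that $\xi$ itself is quadratic, again contradicting $\deg\xi\geq 3$.

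\emph{Main obstacle.} The most delicate step is the dominant-coefficient analysis at the end: one must carefully track the interaction between the polynomial coefficients $P_i(n)$, the relative sizes of the $|\alpha_i|^n$, and the output of Schmidt's theorem, arguing that the only way $|w(n_j)-\xi u_{n_j}|$ can be $o(1/u_{n_j})$ along a subsequence is the quadratic configuration. Secondary technical issues include the treatment of several roots of equal maximal modulus (requiring an intricate choice of forms in $S$) and the bookkeeping needed to verify the Subspace product estimate when several archimedean forms interact.
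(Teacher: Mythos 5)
Your overall strategy (apply the Subspace Theorem to the vectors built from $p_{k_j}$ and the summands of $u_{n_j}$, extract a linear relation, and analyse it) is the right general family of ideas, but the two steps you yourself flag as delicate are precisely where the proof lives, and as written both have genuine gaps. The first is the product estimate $\prod_{v\in S}\prod_i |L_i^v(\mathbf{x}_j)|_v \ll H(\mathbf{x}_j)^{-\varepsilon}$. The roots $\alpha_i$ of a general non-degenerate integer recurrence are arbitrary algebraic numbers, so $K$ has several archimedean places; at a conjugate place $v=\sigma$ the coordinate forms take the values $|\sigma(\alpha_i)|^{n_j}$, which can be large, and the height $H(\mathbf{x}_j)$ is governed by $\max_{\sigma,i}|\sigma(\alpha_i)|^{n_j}$ rather than by $|u_{n_j}|$. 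The single saving $|\xi u_{n_j}-p_{k_j}|<1/u_{n_j}$ at the distinguished real place is therefore in general not enough to beat the contributions of the conjugate places, and the asserted inequality fails. This is exactly the obstruction that forced Corvaja and Zannier to introduce their iterative/Galois method in \cite{CZ04}, and it is why the paper does not apply the Subspace Theorem directly but instead routes Theorem~\ref{th:Main} through the quantitative Theorem~\ref{thm:main} (equivalently Theorem~\ref{thm:1/2}), whose proof invokes the Kulkarni--Mavraki--Nguyen extension \cite{KMN19} of that method (Theorems~\ref{thm:KMN} and \ref{thm:KMN with C}). Your approach only works as stated when all the $\alpha_i$ are rational integers, which is the already-known case \cite[Theorem 4.16]{CZ18} mentioned in the introduction.

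The second gap is the endgame. Even granting a relation $p_{k_j}=w(n_j)$ with $w$ a generalized power sum, the inequality $|w(n_j)-\xi u_{n_j}|<1/u_{n_j}$ holds only along the subsequence $(n_j)$, and a power sum being small along a subsequence does not permit ``matching dominant coefficients'': extracting coefficient identities from such an inequality is again a Corvaja--Zannier-type theorem, so your plan is circular at this point. Moreover, the dichotomy you propose ($w\equiv\xi u_n$ or a ``quadratic configuration'' of the $\alpha_i$) is not how the contradiction actually arises. In the paper, the relations produced by Theorem~\ref{thm:KMN with C} show that $\sigma(\xi)=\xi$ for every $\sigma$ fixing the dominant root $\alpha_1$ up to its ``large'' conjugates; counting these automorphisms gives $m\le d'/d$ where $d'=[\Q(\alpha_1):\Q]$ and $m$ is the number of conjugates of $\alpha_1$ of modulus at least $M^{-(1/(d-1)+\delta)}$. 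Hence $\alpha_1$ has so many conjugates of small modulus that $|\Norm_{\Q(\alpha_1)/\Q}(\alpha_1)|<1$, contradicting that $\alpha_1$ is a nonzero algebraic integer. The cubic-unit example in the introduction (where $\Vert u_n\xi\Vert\ll u_n^{-1/2}$ infinitely often) shows that this norm/Galois analysis, with the sharp exponent $1/(d-1)$, is unavoidable: a cruder ``dominant term'' argument cannot distinguish that configuration from the convergent configuration.
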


The above theorem follows immediately from Theorem~\ref{thm:main} below. 
This stronger theorem uses the results
of Kulkarni, Mavraki, and Nguyen \cite{KMN19}, 
which extend a seminal work of Corvaja and Zannier \cite{CZ04}. For a real number $\theta$, we let
$\Vert\theta\Vert$ denote the distance from $\theta$ to the nearest integer.

\begin{theorem}\label{thm:main}
Let $\xi$ be an irrational algebraic number of degree $d$. 
Let $\varepsilon>0$. Let $(u_n)_{n \geq 1}$ be a 
non-degenerate 
recurrence sequence of integers, which is not a polynomial sequence.  
Then the set
$$
\left\{n\in\N:\ u_n\neq 0\ \text{and}\ \Vert u_n\xi\Vert < \frac{1}{\vert u_n\vert^{(1/(d-1))+\varepsilon}}\right\}
$$ 
is finite.
\end{theorem}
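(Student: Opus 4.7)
The plan is to argue by contradiction. Suppose the set is infinite; for each such $n$ let $m_n\in\Z$ be the nearest integer to $u_n\xi$, so that
$$
|u_n\xi-m_n|<|u_n|^{-1/(d-1)-\varepsilon}.
$$
Since $(u_n)_{n\geq 1}$ is non-degenerate and not a polynomial sequence, $|u_n|\to\infty$ along this subsequence (if $|u_n|$ were bounded, each of its finitely many values would be assumed only finitely often, by Skolem--Mahler--Lech). The case $d=2$ then follows from Liouville's inequality: a quadratic irrational has bounded partial quotients, so $|q\xi-p|\geq c(\xi)/q$ for every $p/q\in\Q$, which contradicts $|u_n\xi-m_n|<|u_n|^{-1-\varepsilon}$ as soon as $|u_n|$ is large.

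The substantive case is $d\geq 3$. Here the induced rational approximation $|\xi-m_n/u_n|<|u_n|^{-d/(d-1)-\varepsilon}$ has exponent $d/(d-1)<2$, so Roth's theorem does not apply; the argument must instead exploit the recurrence structure of $u_n$ through Schmidt's Subspace Theorem, in the style of Corvaja--Zannier~\cite{CZ04} and the extension to recurrences with polynomial coefficients by Kulkarni--Mavraki--Nguyen~\cite{KMN19}. Writing $u_n=\sum_{i=1}^{r}P_i(n)\alpha_i^n$, I would work over a number field $K$ containing $\xi$, all the $\alpha_i$, and the coefficients of the $P_i$, and choose a finite set $S$ of places of $K$ consisting of all archimedean places together with every non-archimedean place where $\xi$ or some $\alpha_i$ fails to be a unit. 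To each exceptional $n$ attach a projective point $\mathbf{x}_n$ recording $m_n$ together with the quantities $\alpha_i^n$, augmented (following \cite{KMN19}) by enough shifts $\alpha_i^{n+\ell}$ to linearize the polynomial factors $P_i(n)$.

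At each of the $d$ archimedean places of $K$ corresponding to the embeddings $\sigma_1,\ldots,\sigma_d$ of $\Q(\xi)$, I would introduce an approximation form $L^{(j)}(\mathbf{y})=y_0-\sigma_j(\xi)\cdot(\text{suitable linear combination of the remaining }y_i)$ together with coordinate forms; at the other places of $S$ only coordinate forms are needed. At the real place of $\xi$ the form $L^{(1)}$ encodes the hypothesis and contributes the factor $|u_n|^{-1/(d-1)-\varepsilon}$, while the conjugate forms $L^{(j)}$ with $j\geq 2$, each individually of archimedean size $\asymp|u_n|$, are collectively controlled by the product formula applied to the $S$-units $\alpha_i^n$ and by the Liouville-type lower bound $|\mathrm{N}_{\Q(\xi)/\Q}(u_n\xi-m_n)|\geq c>0$. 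A direct computation then yields
$$
\prod_{v\in S}\prod_{i}\frac{|L_i^{(v)}(\mathbf{x}_n)|_v}{\|\mathbf{x}_n\|_v}\;\ll\;H(\mathbf{x}_n)^{-\delta}
$$
for some $\delta=\delta(\varepsilon)>0$: the exponent $1/(d-1)$ in the hypothesis, matched against the $d-1$ generic conjugate contributions, is precisely what forces a positive $\delta$. Schmidt's Subspace Theorem then confines the $\mathbf{x}_n$ to finitely many proper hyperplanes of $K^{r+1}$; each such hyperplane is a non-trivial $K$-linear relation among the monomials $P_i(n)\alpha_i^{n+\ell}$, and by the non-degeneracy of $(u_n)$ together with standard finiteness results for $S$-unit equations, each relation holds for only finitely many $n$, producing the desired contradiction.

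The main obstacle will be the careful bookkeeping for this final Subspace-Theorem step: verifying linear independence of the chosen forms at every $v\in S$, handling heights correctly when the coordinates $\alpha_i^n$ are not rational integers (which requires passing to suitably scaled $S$-integer versions), and, most delicately, absorbing the polynomial factors $P_i(n)$ into the Corvaja--Zannier framework --- the essential technical contribution of \cite{KMN19} over the original setting of \cite{CZ04}.
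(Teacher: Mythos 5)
Your setup (argue by contradiction, pass to the nearest integers $m_n$, note $|u_n|\to\infty$ via Skolem--Mahler--Lech, dispose of $d=2$ by bounded partial quotients) is fine and matches the paper. The gap is in the case $d\ge 3$, at the two places where you wave your hands. First, the claimed inequality $\prod_{v}\prod_i |L_i^{(v)}(\mathbf{x}_n)|_v/\Vert\mathbf{x}_n\Vert_v\ll H(\mathbf{x}_n)^{-\delta}$ is asserted rather than computed, and the heuristic you offer does not work as stated: the Liouville-type bound $|\Norm_{\Q(\xi)/\Q}(u_n\xi-m_n)|\ge c$ gives a \emph{lower} bound $\prod_{j\ge 2}|u_n\sigma_j(\xi)-m_n|\gg |u_n|^{1/(d-1)+\varepsilon}$ on the conjugate factors, i.e.\ it makes the Subspace product \emph{larger}, not smaller; it cannot be what "controls" those forms. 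Second, and decisively, even after a correct single application of the Subspace Theorem, "finitely many proper hyperplanes" does not yield "finitely many $n$". The hyperplane relation expresses $m_n$ as a fixed $K$-linear combination of the monomials $q_{n,i}\alpha_i^n$, and such a relation can hold identically in $n$: in the paper's own cubic example ($\xi$ a unit in a cubic field with complex conjugate embeddings, $u_n=\xi^n+\alpha^n+\bar\alpha^n$) one has $m_n=u_{n+1}$ for all large $n$, so the relation is satisfied for every $n$ and no $S$-unit-equation argument kills it. This is exactly why the exponent $1/(d-1)$ is sharp for $d=3$, and it shows that the $+\varepsilon$ must be exploited \emph{after} the Subspace step, not merely to produce the $H^{-\delta}$ saving.

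What the paper does instead (Theorem~\ref{thm:1/2}) is to run the Corvaja--Zannier/Kulkarni--Mavraki--Nguyen machinery as a black box that outputs \emph{structure} rather than finiteness, and then derive the contradiction from a Galois-theoretic norm computation. Concretely: a first application of (a rescaled version of) the KMN theorem, Theorem~\ref{thm:KMN with C}, to $u_n$ itself shows the roots $\alpha_i$ are algebraic integers with matched Galois conjugates; a second application, to the sum $\sum\xi q_{n,i}\alpha_i^n$ truncated to the roots of modulus at least $C=M^{-(1/(d-1)+\delta)}$ (where $M=\max|\alpha_i|>1$), forces $\sigma(\xi)=\xi$ for every $\sigma$ carrying $\alpha_1$ to one of its conjugates of modulus $\ge C$. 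Since $[\Q(\xi):\Q]=d$, at most $d'/d$ of the $d'$ conjugates of $\alpha_1$ can be that large, and the remaining $d'(d-1)/d$ conjugates have modulus $<M^{-(1/(d-1)+\delta)}$, giving $|\Norm_{\Q(\alpha_1)/\Q}(\alpha_1)|\le M^{d'/d}\,M^{-(1/(d-1)+\delta)d'(d-1)/d}<1$, contradicting integrality. This is where the degree $d$ and the exponent $1/(d-1)+\varepsilon$ actually interact; your proposal never reaches this step, so as written it does not close.
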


The case $d=2$ of Theorem \ref{thm:main} is immediate, since quadratic real numbers have bounded partial 
quotients in their continued fraction expansion. Consequently, we restrict our attention to the case $d \ge 3$. 
Theorem \ref{thm:main} is a special case of Theorem \ref{thm:1/2}, which deals with a larger class of integer 
sequences than that of recurrence sequences. 

The exponent $1/(d-1)$ in Theorem \ref{thm:main} is stronger than what we really need since any exponent $< 1$ 
would be sufficient for our purpose of proving Theorem~\ref{th:Main}
(recall that any convergent $r/s$ to $\xi$ satisfies $|\xi - r/s| < 1/s^2$).    
When $d=3$, this exponent $1/(d-1)=1/2$ is best possible, as can be seen with the following example.
Let $K\subset\mathbb{R}$ be a cubic field with a pair of complex-conjugate embeddings. 
Let $\xi\in K$ with $\vert\xi\vert>1$ be a unit of the ring of integers. Let $\alpha$ and $\bar{\alpha}$ denote the remaining Galois 
conjugates of $\xi$. We have $\vert\alpha\vert=\vert\xi\vert^{-1/2}$ and, 
setting $u_n=\xi^n+\alpha^n+\bar{\alpha}^n$ for $n \ge 1$, we check that
$$
\vert u_n\xi-u_{n+1}\vert \ll_\xi \vert \alpha^n\vert \ll_\xi \vert u_n\vert^{-1/2}, \quad \hbox{for $n \ge 1$},
$$ 
where $\ll_\xi$ means that the implicit constant is positive and depends only on $\xi$.    

When $d\geq 4$, we do not know if Theorem~\ref{thm:main} remains valid with a smaller exponent than $1/(d-1)$.

Theorem \ref{th:Main} allows us to complement  
the following result of Lenstra and Shallit \cite{LeSh93}. 

\begin{theorem}[Lenstra and Shallit]  \label{LeSh}
Let $\theta$ be an irrational real number, whose continued fraction
expansion is given by $\theta = [a_0; a_1, a_2, \ldots]$, and let $(p_k)_{k \ge 1}$ and $(q_k)_{k \ge 1}$ 
be the sequence of
numerators and denominators of the convergents to $\theta$. Then
the following four conditions are equivalent:
\begin{itemize}
\item [(i)] The sequence $(p_k)_{k \ge 1}$ satisfies a linear recurrence with constant complex coefficients;

\item [(ii)] The sequence $(q_k)_{k \ge 1}$ satisfies a linear recurrence with constant complex coefficients;

\item [(iii)] The sequence  $(a_n)_{n \ge 0}$ is ultimately periodic;

\item [(iv)] $\theta$ is a quadratic irrational.
\end{itemize}
\end{theorem}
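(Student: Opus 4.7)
The plan is to close a cycle of implications among the four conditions, taking the classical Lagrange equivalence (iii)$\,\Leftrightarrow\,$(iv) as known. The direction (iii)$\,\Rightarrow\,$(i)$\,\wedge\,$(ii) has essentially been carried out in the paragraph preceding the theorem: if $(a_n)$ is ultimately periodic with pre-period $r$ and period $s$, then for $k \ge r$ the cyclic product $P(k) = M_{k+1} M_{k+2} \cdots M_{k+s}$ of Jacobi matrices $M_n = \begin{pmatrix} a_n & 1 \\ 1 & 0 \end{pmatrix}$ has determinant $(-1)^s$ and a common trace $t$ independent of $k$ (since cyclic shifts of a matrix product are conjugate). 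Cayley--Hamilton gives $P(k)^2 = t P(k) - (-1)^s I$, so writing $N_k := \begin{pmatrix} p_k & p_{k-1} \\ q_k & q_{k-1} \end{pmatrix}$, the identity $N_{k+2s} = N_k P(k)^2 = t N_{k+s} - (-1)^s N_k$ yields the linear recurrences $p_{k+2s} = t p_{k+s} - (-1)^s p_k$ and $q_{k+2s} = t q_{k+s} - (-1)^s q_k$ for $k \ge r$.

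The substance of the theorem is the converse. I would establish (ii)$\,\Rightarrow\,$(iv), with (i)$\,\Rightarrow\,$(iv) obtained by the parallel argument combined with the identity $p_k q_{k-1} - p_{k-1} q_k = (-1)^k$ to transfer a relation on shifts of $(p_k)$ into one on shifts of $(q_k)$. Assume $(q_k)$ satisfies a linear recurrence over $\C$ and expand $q_k = \sum_{i=1}^{r} P_i(k) \alpha_i^k$ in its minimal form, with distinct nonzero $\alpha_i$ and nonzero polynomials $P_i$, labeled so that $|\alpha_1| \ge |\alpha_2| \ge \cdots$. Since $(q_k)$ is a strictly increasing sequence of positive integers, one has $|\alpha_1| > 1$.

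In the clean case where $|\alpha_1| > |\alpha_2|$ and $\alpha_1$ is real and positive, one obtains $q_{k+1}/q_k \to \alpha_1$ and $q_{k-1}/q_k \to \alpha_1^{-1}$, and the fundamental recurrence $q_{k+1} = a_{k+1} q_k + q_{k-1}$ gives $a_{k+1} \to \alpha_1 - \alpha_1^{-1}$. An integer sequence that converges is eventually constant, so $\theta$ has eventually constant partial quotients and is quadratic.

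The main obstacle is reducing the general case to this one. If the recurrence is degenerate, I would pass to the arithmetic progressions $k \equiv m \pmod{L}$, where $L$ is the lcm of the orders of the roots of unity among the $\alpha_i/\alpha_j$ (as recalled above Theorem~\ref{th:Main}); each resulting sub-sequence is non-degenerate. The key remaining claim is that on a non-degenerate positive integer sub-sequence the dominant root is unique, real, and positive. A non-real dominant root $\alpha_1$ would appear with its complex conjugate and contribute a term of the shape $2|P_1(k)||\alpha_1|^k \cos(k\arg\alpha_1 + \psi(k))$; by non-degeneracy, $\arg(\alpha_1)/\pi$ is irrational, so this cosine takes negative values of near-maximal magnitude along a subsequence, forcing negative $q_k$ and contradicting positivity. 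A second real dominant root, necessarily of opposite sign, would similarly force $(q_k)$ to oscillate. Hence $q_{k+1}/q_k$ converges along each residue class modulo $L$, the integers $a_{k+1}$ are eventually constant on each class, and $(a_n)$ is ultimately periodic with period dividing~$L$, establishing (iii).
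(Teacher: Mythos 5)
Your easy directions are fine: (iii)$\Leftrightarrow$(iv) is Lagrange, and the matrix identity $N_{k+2s}=tN_{k+s}-(-1)^sN_k$ correctly gives (iii)$\Rightarrow$(i),(ii). But the hard direction (ii)$\Rightarrow$(iii) has a genuine gap, and it sits exactly where the difficulty of this theorem lives. Your ``key remaining claim'' --- that a non-degenerate linear recurrence of positive integers (even a strictly increasing one with Fibonacci-type growth) has a unique dominant root, which is real and positive --- is false. Your argument only treats two configurations of dominant roots: a single complex-conjugate pair, or two real roots; it silently omits the case where a real positive dominant root coexists with non-real roots of the same modulus. That case really occurs: take $\alpha=1+i\sqrt{15}$, so $|\alpha|=4$ and, by Niven's theorem, neither $\alpha/4$ nor $\alpha/\bar\alpha$ is a root of unity. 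Then $u_k=100\cdot 4^k+\alpha^k+\bar\alpha^k=4^k\bigl(100+2\cos(k\psi)\bigr)$ with $\psi=\arg\alpha$ is a non-degenerate integer recurrence, strictly increasing, satisfying $u_{k+1}\ge u_k+u_{k-1}$, with three dominant roots of modulus $4$. For such a sequence $u_{k+1}/u_k$ oscillates and does not converge along any arithmetic progression, so the entire ``ratios converge, hence the integer $a_{k+1}$ is eventually constant'' mechanism collapses. Positivity and growth conditions alone cannot rule this configuration out.

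What rescues the theorem is the arithmetic fact you never use: $a_{k+1}=(q_{k+1}-q_{k-1})/q_k$ is an \emph{integer} for every $k$, i.e.\ a Hadamard quotient of linear recurrences taking integral values. This is precisely why the paper records that the Lenstra--Shallit proof ``rests on the Hadamard Quotient Theorem'': that theorem shows $(a_k)$ is itself a linear recurrence, after which boundedness (forced by $q_k\ll R^k$) and a Skolem--Mahler--Lech argument give ultimate periodicity; B\'ezivin's paper gives a more elementary route to a more general statement. Note that the paper itself does not reprove Theorem~\ref{LeSh} --- it quotes it --- so you are attempting a genuinely new elementary proof, and as written it does not go through without an input of Hadamard-quotient type to exclude multiple dominant roots.
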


The proof of Theorem \ref{LeSh} rests on the Hadamard Quotient Theorem. 
A simpler proof of a more general statement has been given by
B\'ezivin \cite{Be94}, who instead of (ii) only assumes that 
$(q_k)_{k \ge 1}$ satisfies a linear recurrence with coefficients being polynomials in $k$ 
and that the series $\sum_{k \ge 1} q_k z^k$ has a nonzero convergence radius.

We strengthen Theorem \ref{LeSh} for convergents of algebraic numbers as follows. 
\begin{theorem}  \label{LeShbis}
Let $\xi = [a_0; a_1, a_2, \ldots]$ be an irrational real algebraic number, and let $(p_k)_{k \ge 1}$ and $(q_k)_{k \ge 1}$ 
be the sequence of
numerators and denominators of the convergents to $\xi$. Then
the following four conditions are equivalent:
\begin{itemize}
\item [(i)] The sequence $(p_k)_{k \ge 1}$ has an infinite intersection with 
some non-degenerate linear recurrence sequence 
that is not a polynomial sequence;

\item [(ii)] The sequence $(q_k)_{k \ge 1}$ has an infinite intersection with 
some non-degenerate linear recurrence sequence
that is not a polynomial sequence;

\item [(iii)] The sequence  $(a_n)_{n \ge 0}$ is ultimately periodic;

\item [(iv)] $\xi$ is a quadratic irrational.
\end{itemize}
\end{theorem}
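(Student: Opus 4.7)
The plan is to deduce Theorem~\ref{LeShbis} by combining Lagrange's classical characterisation of quadratic irrationals with Theorem~\ref{thm:main}. The equivalence (iii)~$\Leftrightarrow$~(iv) is Lagrange's theorem and requires no further comment, so I focus on the four implications involving (i) and (ii).

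For (iv)~$\Rightarrow$~(ii), I invoke the formula $q_{k+2s}=tq_{k+s}-(-1)^sq_k$ already recorded in the preamble: for any residue class $k_0\pmod{s}$ past the pre-period, the subsequence $v_n:=q_{k_0+(n-1)s}$ is a binary recurrence of positive integers with characteristic roots $(t\pm\sqrt{t^2-4(-1)^s})/2$. One root has absolute value greater than $1$ and the other less than $1$ (the period matrix has non-negative integer entries and is not the identity), so their ratio is a real number different from $\pm 1$, in particular not a root of unity. Hence $(v_n)$ is non-degenerate and not a polynomial sequence, and lies entirely in $(q_k)_{k\ge 1}$. The implication (iv)~$\Rightarrow$~(i) is identical, using that $(p_k)_{k\ge 1}$ satisfies the same three-term recurrence as $(q_k)$, and therefore also the same binary recurrence $p_{k+2s}=tp_{k+s}-(-1)^sp_k$.

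For (ii)~$\Rightarrow$~(iv), suppose there are infinitely many pairs $(n,k)$ with $u_n=q_k$, for some non-degenerate non-polynomial integer recurrence $(u_n)$. The standard estimate $|q_k\xi-p_k|<1/q_k$ gives $\Vert u_n\xi\Vert<1/\vert u_n\vert$ for infinitely many $n$ with $u_n\ne 0$. If $\xi$ had degree $d\ge 3$, pick any $\varepsilon\in(0,(d-2)/(d-1))$; Theorem~\ref{thm:main} would then force $\Vert u_n\xi\Vert\ge\vert u_n\vert^{-1/(d-1)-\varepsilon}>\vert u_n\vert^{-1}$ for all sufficiently large $n$ with $u_n\ne 0$, a contradiction. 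Being irrational and algebraic of degree $\le 2$, $\xi$ must be quadratic. For (i)~$\Rightarrow$~(iv) one argues analogously: from $\vert p_k-\xi q_k\vert<1/q_k$ we obtain
$$
\Bigl|\frac{p_k}{\xi}-q_k\Bigr|<\frac{1}{\vert\xi\vert q_k}\ll\frac{1}{\vert p_k\vert},
$$
so $\Vert u_n/\xi\Vert\ll 1/\vert u_n\vert$ for infinitely many $n$. Since $1/\xi$ is algebraic of the same degree as $\xi$, applying Theorem~\ref{thm:main} to $1/\xi$ in place of $\xi$ gives the same contradiction whenever $d\ge 3$.

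The main obstacle I anticipate lies in the direction (iv)~$\Rightarrow$~(i),~(ii): one must verify that the extracted binary recurrence is truly non-degenerate and not a polynomial sequence. This reduces to the classical observation that the period matrix $\prod_{j=1}^{s}\bigl(\begin{smallmatrix}a_{r+j}&1\\1&0\end{smallmatrix}\bigr)$ has two distinct real eigenvalues of distinct absolute value, so that their ratio is a real number of absolute value $\ne 1$, hence not a root of unity. Everything else is a direct application of Theorem~\ref{thm:main} and of its counterpart for $1/\xi$.
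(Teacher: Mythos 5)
Your proposal is correct and follows essentially the same route as the paper: (iii)$\Leftrightarrow$(iv) is Lagrange, (iv)$\Rightarrow$(i),(ii) comes from the explicit binary recurrences $q_{k+2s}=tq_{k+s}-(-1)^sq_k$ (whose non-degeneracy you rightly check via the distinct absolute values of the roots), (ii)$\Rightarrow$(iv) is Theorem~\ref{th:Main} (i.e.\ Theorem~\ref{thm:main} applied with $\Vert q_k\xi\Vert<q_k^{-1}$), and (i)$\Rightarrow$(iv) uses $\Vert p_k/\xi\Vert\ll_\xi |p_k|^{-1}$ together with Theorem~\ref{thm:main} applied to $1/\xi$, exactly as in the paper.
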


Now we present our results concerning Question B. Let $b \ge 2$ be an integer. Every positive integer $N$ can be written 
uniquely as 
$$
N = d_k b^{k} + \ldots + d_1 b + b_0, 
$$
where
$$
d_0, d_1, \ldots , d_k \in \{0, 1, \ldots , b-1\}, \quad
d_k \not= 0.
$$
We define the length 
$$
\cL (N, b) = \Card \{ 0 \le j \le k : d_j \not= 0 \}
$$ 
of the $b$-ary representation of $N$. 
We also define the number of digit changes by 
$$
\cDC (N, b) = \Card \{ 2 \le j \le k : d_j \not= d_{j-1} \}. 
$$

\begin{theorem}\label{thm:divisor & length}
Let $\xi$ be an irrational real algebraic number and let $b\geq 2$ be an integer. Let $(u_n)_{n\geq 1}$ be a strictly  increasing sequence of 
positive integers and  $\lambda\in (0,1]$ such that for every $\varepsilon>0$, the inequality
$$\Vert u_n\xi\Vert<  u_n^{-\lambda+\varepsilon}$$     
holds for all but finitely many $n$. We have:
\begin{itemize}
    \item [(i)] Let $k$ be a positive integer and let $\varepsilon>0$. For all sufficiently large $n$, if $\delta$ is a divisor of $u_n$ with $\cL(\delta,b)\leq k$ then $\delta<u_n^{(k-\lambda)/k+\varepsilon}$.
    \item [(ii)] Let $k$ be a non-negative integer and let $\varepsilon>0$. For all sufficiently large $n$, if $\delta$ is a divisor of $u_n$ with $\cDC(\delta,b)\leq k$ then  $\delta<u_n^{(k+2-\lambda)/(k+2)+\varepsilon}$.      
\end{itemize}
Consequently, let $(p_k/q_k)_{k\geq 1}$ denote the sequence of convergents to $\xi$ then each one of the limits 
$\displaystyle\lim_{k \to + \infty} \, \cL (q_k, b)$,
$\displaystyle\lim_{k \to + \infty} \, \cDC (q_k, b)$,
$\displaystyle\lim_{k \to + \infty} \, \cL (p_k, b)$, and 
$\displaystyle\lim_{k \to + \infty} \, \cDC (p_k, b)$
is infinite. 
\end{theorem}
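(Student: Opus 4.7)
The plan is to establish parts (i) and (ii) via the $p$-adic Schmidt Subspace Theorem, following the paradigm of Corvaja--Zannier and Adamczewski--Bugeaud for digit questions on algebraic numbers, and then deduce the consequence on convergents by specializing to $\lambda=1$. I sketch part (i) in detail; part (ii) will reduce to (i) by telescoping.

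\textbf{Setup for (i).} Fix $k\ge 1$ and $\varepsilon>0$ and argue by contradiction: suppose infinitely many $n$ admit a divisor $\delta=\delta_n$ of $u_n$ with $\cL(\delta,b)\le k$ and $\delta\ge u_n^{(k-\lambda)/k+\varepsilon}$. Write $\delta=c_1b^{e_1}+\cdots+c_kb^{e_k}$ with $c_j\in\{0,\ldots,b-1\}$ and $e_1>\cdots>e_k\ge 0$, set $m=u_n/\delta$, and let $v_n\in\Z$ be the nearest integer to $u_n\xi$, so that $|u_n\xi-v_n|<u_n^{-\lambda+\varepsilon/2}$ for large $n$. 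Consider $\mathbf{x}_n=(v_n,mc_1b^{e_1},\ldots,mc_kb^{e_k})\in\Z^{k+1}$, whose last $k$ coordinates sum to $u_n$. Put $S=\{\infty\}\cup\{p\text{ prime}:p\mid b\}$; at $\infty$ take $L_{\infty,0}=x_0-\xi(x_1+\cdots+x_k)$ and $L_{\infty,j}=x_j$ for $1\le j\le k$, and at each $p\in S\setminus\{\infty\}$ take $L_{p,j}=x_j$ for $0\le j\le k$. The product formula $\prod_{p\mid b}|b|_p^{e_j}=b^{-e_j}$ cancels the $b^{e_j}$ factor in $|x_j|_\infty=mc_jb^{e_j}$, giving $|x_j|_\infty\cdot\prod_{p\mid b}|x_j|_p\le mc_j$ for $j\ge 1$. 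Combined with $|L_{\infty,0}(\mathbf{x}_n)|<u_n^{-\lambda+\varepsilon/2}$, $\prod_{p\mid b}|v_n|_p\le 1$, and $m\le u_n^{\lambda/k-\varepsilon}$, this yields $\prod_{v\in S}\prod_{j=0}^k|L_{v,j}(\mathbf{x}_n)|_v\le C(k,b)\cdot u_n^{-k\varepsilon+\varepsilon/2}$, a power saving in $H(\mathbf{x}_n)\asymp u_n$, so the $p$-adic Subspace Theorem confines the $\mathbf{x}_n$ to a finite union of proper subspaces of $\Q^{k+1}$.

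\textbf{Main obstacle: subspace analysis.} Passing to an infinite subsequence, I may assume $(c_1,\ldots,c_k)$ is constant and all $\mathbf{x}_n$ satisfy $a_0v_n+m\sum_j a_j c_j b^{e_j}=0$. Substituting $v_n=u_n\xi+O(u_n^{-\lambda+\varepsilon/2})$ and $u_n=m\sum_j c_j b^{e_j}$ produces the approximate identity $\sum_j(a_0\xi+a_j)c_j b^{e_j}=O(u_n^{-\lambda+\varepsilon/2}/m)$. The right-hand side is polynomially small while a nonvanishing combination of $S$-units satisfies a Liouville-type lower bound, so for large $n$ the identity becomes exact: $\sum_j(a_0\xi+a_j)c_j b^{e_j}=0$. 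Irrationality of $\xi$ guarantees at least one coefficient $(a_0\xi+a_j)c_j$ is nonzero, and (after iterating on vanishing subsums) the Evertse--van der Poorten--Schlickewei theorem on $S$-unit equations over $\Q(\xi)$ bounds the projective solutions $[b^{e_1}:\cdots:b^{e_k}]$ to finitely many values, forcing the differences $(e_1-e_k,\ldots,e_{k-1}-e_k)$ to lie in a finite set. Hence $\delta=D\cdot b^{e_k}$ for a fixed $D$ and varying $e_k\to\infty$, and a second application of the Subspace Theorem to $(v_n,mDb^{e_k})\in\Z^2$ (equivalently, Roth's theorem) forces $v_n/u_n$ to equal a fixed rational along an infinite subsequence, contradicting the irrationality of $\xi$.

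\textbf{Part (ii) and the consequence.} If $\cDC(\delta,b)\le k$, then $\delta$ decomposes into at most $k+1$ blocks of constant digits, and $(b-1)\delta$ telescopes to a sum of at most $k+2$ signed monomials $c\cdot b^e$; running the same Subspace Theorem argument with such signed sums (equivalently, applying (i) with parameter $k+2$ to the divisor $(b-1)\delta$ of $(b-1)u_n$, using $\Vert(b-1)u_n\xi\Vert\le(b-1)\Vert u_n\xi\Vert$ to preserve $\lambda$) yields the claimed bound. For the convergents: $(u_n)=(q_n(\xi))$ satisfies $\Vert q_n\xi\Vert<1/q_n$, giving $\lambda=1$; taking $\delta=q_n$ in (i) and (ii) produces the absurd bounds $q_n<q_n^{(k-1)/k+\varepsilon}$ and $q_n<q_n^{(k+1)/(k+2)+\varepsilon}$ whenever $\cL(q_n,b)\le k$ or $\cDC(q_n,b)\le k$, forcing $\cL(q_n,b),\cDC(q_n,b)\to\infty$. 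For $p_n$, apply the same argument to $1/\xi$ (also irrational algebraic), whose convergent denominators are the $p_n(\xi)$.
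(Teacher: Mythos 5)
Your overall architecture is the same as the paper's: the same linear forms ($X_0-\xi\sum_j X_j$ at the archimedean place, the coordinate forms at the places dividing $b$), the same height computation giving the power saving $\ll u_n^{-k\varepsilon+\varepsilon/2}$, the same reduction of (ii) to a signed-digit version of (i) via multiplication by $b-1$, and the same specialization to convergents at the end. The gap is in the ``subspace analysis''. Your claim that the approximate identity $\sum_j(a_0\xi+a_j)c_jb^{e_j}=O(u_n^{-\lambda+\varepsilon/2}/m)$ ``becomes exact'' because a nonvanishing combination of $S$-units obeys a Liouville-type lower bound is quantitatively false: the number $\sum_j(a_0\xi+a_j)c_jb^{e_j}$ lies in $\Q(\xi)$ and has all conjugates of modulus $\ll u_n$, so the norm/Liouville lower bound it satisfies when nonzero is only $\gg u_n^{-(d-1)}$ with $d=[\Q(\xi):\Q]$; since $\lambda\le 1\le d-1$, this does not beat your upper bound, and in fact when $a_0\neq 0$ the sum is \emph{never} zero (its ``$\xi$-part'' is $a_0\sum_jc_jb^{e_j}\neq 0$), so exactness is simply unattainable in that case. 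Moreover, even in the case $a_0=0$, where the relation $\sum_ja_jc_jb^{e_j}=0$ does hold exactly, the $S$-unit equation theorem only bounds the ratios $b^{e_i-e_j}$ for indices lying in a common minimal vanishing subsum with nonzero coefficients; indices with $a_j=0$ and indices in distinct vanishing subsums are unconstrained, so the conclusion that all differences $e_j-e_k$ are bounded and $\delta=D\cdot b^{e_k}$ does not follow, and your second Roth/Ridout step has nothing to bite on.

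Both defects are repaired by the device the paper uses in place of your middle step: before (or after) applying the Subspace Theorem, pass to a subsequence on which each consecutive gap $e_{j-1}-e_j$ is either constant (absorbing such blocks into new fixed integer coefficients $D_j$) or tends to infinity, then divide the exact relation $a_0v_n+m\sum_ja_jc_jb^{e_j}=0$ by $u_n=m\sum_jc_jb^{e_j}$ and let $n\to\infty$. The quotient converges to $a_0\xi+t_h/D_h$, where $t_h,D_h$ are the coefficients of the top block, forcing $a_0=t_h=0$ by the irrationality of $\xi$; the remaining coefficients are then killed one at a time by dividing the residual identity by the successive leading powers of $b$ and using the divergent gaps. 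No lower bound for linear forms in $S$-units and no $S$-unit equation theorem are needed, and the argument covers uniformly the cases your decomposition into vanishing subsums leaves open.
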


Except for certain quadratic numbers, it seems to be a very difficult problem 
to get an effective version of the last assertion of Theorem \ref{thm:divisor & length}. 
Stewart \cite[Theorem 2]{Ste80} established that
if $(u_n)_{n \ge 1}$ is a binary sequence of integers, whose roots $\xi, \xi'$ are quadratic numbers 
with $|\xi| > \max\{1, |\xi'|\}$, then there exists a positive real number $C$ such that   
$$
\cL (u_n, b) > \frac{\log n}{\log \log n + C} - 1, \quad n \ge 5.   
$$
Consequently, if $(p_k / q_k)_{k \ge 1}$ denote the sequence of convergents to a quadratic 
real algebraic number,
then for $k \ge 4$ we have    
$$
\cL (q_k, b) >  \frac{\log k}{\log \log k + C} - 1
\quad \hbox{and} \quad
\cDC (q_k, b) >  \frac{\log k}{\log \log k + C} - 1. 
$$

A similar question can be asked for the Zeckendorf representation \cite{Zec72}    
of $q_k$. 
Let $(F_n)_{n \ge 0}$ denote the Fibonacci sequence defined by $F_0 = 0$, 
$F_1 = 1$, and $F_{n +2} = F_{n+1} + F_n$ for $n \ge 0$. Every positive 
integer $N$ can be written uniquely as a sum
$$
N = \eps_\ell F_\ell + \eps_{\ell -1} F_{\ell - 1} + \ldots + \eps_2 F_2 + \eps_1 F_1,   
$$
with $\eps_\ell = 1$, $\eps_j$ in $\{0, 1\}$, and 
$\eps_j \eps_{j+1} = 0$ for $j =1, \ldots , \ell-1$. 
This representation of $N$ is called its Zeckendorf representation. 
The number of digits of $N$ in its Zeckendorf representation is the number of positive integers $j$ 
for which $\eps_j$ is equal to $1$. 
By using the Schmidt Subspace Theorem we can in a similar way   
prove that 
the number of digits of $q_k(\xi)$ in its Zeckendorf representation tends to infinity with $k$, we omit the 
details (but see \cite{Bu21}).

Our last result is motivated by a theorem of Erd\H os and Mahler \cite{ErMa39} on convergents to 
real numbers. 
Let $S$ be a set of prime numbers. For a non-zero integer $N$, let $[N]_S$ denote the largest divisor of $N$ 
composed solely of primes from $S$. Set $[0]_S = 0$.   
Recall that the irrationality exponent $\mu(\theta)$ 
of an irrational real number $\theta$ is the supremum of the real numbers $\mu$ such that   
there exist infinitely many rational numbers $r/s$ with $s \ge 1$ and $|\theta - r/s| < 1 / s^\mu$.  
It is always at least equal to $2$ and, by definition, $\theta$ is called a Liouville number  
when $\mu (\theta)$ is infinite. 
Erd\H os and Mahler \cite{ErMa39} established that, when $\theta$ is irrational and not a Liouville number, 
then the greatest prime factor of   
$q_{k-1} q_k q_{k+1}$ tends to infinity with $k$. 
We obtain the following more precise version of their result. 

\begin{theorem}   \label{EM}
Let $\theta$ be an irrational real number and $\mu$ its irrationality exponent. 
Let $(p_k/q_k)_{k \ge 1}$ denote the sequence of convergents to $\theta$. 
Let $S$ be a finite set of prime numbers.
If $\mu$ is finite, then, for every $\eps > 0$ and every $k$ sufficiently large (depending on $\eps$), we have
\begin{equation}\label{eq:EM for q}
[q_{k-1} q_k q_{k+1}]_S < (q_{k-1} q_k q_{k+1})^{\mu/(\mu + 1) + \eps}.    
\end{equation}
The same conclusion holds when  the sequence 
$(q_k)_{k\geq 1}$ is replaced by 
$(\vert p_k\vert)_{k\geq 1}$.
\end{theorem}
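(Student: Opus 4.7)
The plan is to exploit three sources of information in tandem: the convergent estimate $|q_j\theta - p_j| \le 1/q_{j+1}$, the lower bound $|\theta - p/q| > q^{-\mu - \eps}$ for reduced rationals $p/q$ with $q$ large (which is the very definition of $\mu$), and the decomposition $q_j = u_j w_j$ with $u_j = [q_j]_S$ and $w_j$ coprime to every prime in $S$. The strategy is to show that each $w_j$ is forced to be large, which after multiplying over three consecutive indices controls $[q_{k-1}q_k q_{k+1}]_S = u_{k-1}u_k u_{k+1}$ from above.

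First I would observe that $\gcd(p_j, q_j) = 1$ forces $\gcd(p_j, w_j)=1$, so $p_j/w_j$ is a reduced fraction; dividing the convergent inequality by $w_j$ gives
$$
\left| u_j\theta - \frac{p_j}{w_j} \right| \le \frac{1}{w_j q_{j+1}}.
$$
Since $u_j\theta$ is irrational with irrationality exponent $\mu$, the matching lower bound $|u_j\theta - p_j/w_j| > w_j^{-\mu - \eps}$ holds as soon as $w_j$ is sufficiently large, which upon comparison with the previous display yields the key estimate
$$
w_j \;\ge\; q_{j+1}^{\,1/(\mu-1) - \eps}.
$$
Applying this estimate at $j = k-1, k, k+1$ and multiplying, together with the monotonicity $q_{k+2} \ge q_{k-1}$, produces
$$
w_{k-1} w_k w_{k+1} \;\ge\; (q_{k-1} q_k q_{k+1})^{1/(\mu-1) - \eps}.
$$
Because $1/(\mu - 1) - 1/(\mu + 1) = 2/(\mu^2-1)$ is a fixed positive gap, absorbing the excess into $\eps$ and inverting gives the claim
$$
[q_{k-1}q_k q_{k+1}]_S = \frac{q_{k-1}q_k q_{k+1}}{w_{k-1} w_k w_{k+1}} \;\le\; (q_{k-1}q_k q_{k+1})^{\mu/(\mu+1) + \eps}.
$$
For the statement about $(|p_k|)_{k\ge 1}$, since $|p_k| \asymp q_k$ and $|p_k q_{k-1} - p_{k-1}q_k|=1$ makes consecutive numerators coprime, the entire argument applies verbatim with $(p_j)$ in place of $(q_j)$ after swapping the roles of numerator and denominator in the basic approximation.

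The main obstacle will be making the lower bound on $w_j$ uniform across $j$: a priori the threshold beyond which the irrationality inequality for $u_j\theta$ is valid depends on the particular rational multiple $u_j\theta$, and hence on $j$. The resolution is a case split on the size of $w_j$: when $w_j \ge q_{j+1}^{1/(\mu-1) - \eps}$ the argument above applies directly, while when some $w_j$ is exceptionally small (so that $q_j$ is nearly $S$-smooth) one invokes the qualitative Erd\H os--Mahler theorem together with the contributions of the other two indices in the triple to conclude that such a configuration cannot persist for all sufficiently large $k$, thereby preserving the product bound in every case.
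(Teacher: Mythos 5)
There is a genuine gap, and it sits exactly where you flag ``the main obstacle.'' Your key estimate $w_j \ge q_{j+1}^{1/(\mu-1)-\eps}$ rests on applying the irrationality-measure lower bound $\vert u_j\theta - p_j/w_j\vert > w_j^{-\mu-\eps}$ to the numbers $u_j\theta$. While each $u_j\theta$ indeed has irrationality exponent $\mu$, the threshold ``$w_j$ sufficiently large'' beyond which that inequality holds depends on the number $u_j\theta$, hence on $j$, and there is no way to make it uniform as $u_j\to\infty$. Worse, the estimate itself is false in general: the discussion following the theorem recalls that the Folding Lemma produces $\theta$ with $\mu(\theta)=2$ having infinitely many convergents whose denominator $q_k$ is a power of a fixed $b$; taking $S$ to be the set of primes dividing $b$ gives $w_k=1$ for infinitely many $k$, whereas your estimate would force $w_k\ge q_{k+1}^{1-\eps}$. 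Your proposed fallback --- invoking ``the qualitative Erd\H os--Mahler theorem'' when some $w_j$ is small --- cannot repair this: that theorem only asserts that the greatest prime factor of $q_{k-1}q_kq_{k+1}$ tends to infinity, yields no exponent, and is in any case a weaker form of the very statement being proved.

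The actual proof must take a different route, and the paper's does: it exploits the linear relation $q_{k+1}=a_{k+1}q_k+q_{k-1}$, divides out $d_k=\gcd(q_{k-1},q_{k+1})$ (which divides $a_{k+1}$ because $\gcd(q_{k-1},q_k)=1$), and applies the $p$-adic Subspace Theorem to the coprime pairs $(q_{k-1}^*,q_{k+1}^*)$ with the three linear forms $X$, $Y$, $Y-X$; coprimality forces the exceptional subspaces to contain only finitely many such pairs, and the resulting lower bound on $\prod_{p\in S}\vert q_{k-1}q_kq_{k+1}\vert_p$ is converted into \eqref{eq:EM for q} using only $q_{k-1}<q_k$ and $q_{k+1}<q_k^{\mu-1+\eps}$ --- the single place where finiteness of $\mu$ enters. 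The theorem is stated for the triple $q_{k-1}q_kq_{k+1}$ rather than for a single $q_k$ precisely because no bound on $[q_k]_S$ alone is available for transcendental $\theta$; that single-index bound is, in effect, what your argument tries to establish.
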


When $\theta$ is algebraic irrational and $\eps > 0$, we have
$[q_k]_S<q_k^{\varepsilon}$ for all large $k$ by Ridout's theorem.    
The interesting feature of Theorem~\ref{EM} is that it holds for all transcendental  non-Liouville 
numbers.

Theorem \ref{EM} is ineffective. Under its assumption, it is proved in \cite{Bu23} that there exists a (large)  
positive, effectively computable $c = c(S)$ such that 
$$
[q_{k-1} q_k q_{k+1}]_S < (q_{k-1} q_k q_{k+1})^{1 - 1 / (c \mu \log \mu)}, \quad k \ge 2. 
$$

For $\mu = 2$ (that is, for almost all $\theta$), the exponent in \eqref{eq:EM for q} becomes $2/3 + \eps$.    
It is an interesting question to determine whether it is best possible. It cannot be smaller than $1/3$.  
Indeed, the Folding Lemma (see e.g. \cite[Section 7.6]{Bu12}) allows one, for any given 
integer $b \ge 2$, to construct explicitely real numbers $\theta$ with $\mu (\theta) = 2$ and having 
infinitely many convergents whose denominator is a power of $b$.

Furthermore, there exist irrational real numbers $\theta = [a_0; a_1, a_2, \ldots]$ with convergents 
$p_k / q_k$ such that the $q_k$'s are alternating among powers of $2$ and $3$. 
Indeed, let $k \ge 2$ and assume that $q_{k-1}=2^c$ and $q_k=3^d$ for positive integers $c, d$. 
Then, we have to find a positive integer $a_{k+1}$ such that 
$2^c + a_{k+1}3^d$ is a power of $2$.  To do this, it is sufficient to take for $m$ the smallest integer greater than $c$  
such that  $2^{m-c}$ is congruent to $1$ modulo $3^d$ and then define $a_{k+1} = (2^{m-c} - 1) / 3^d$. 
The sequence $(a_k)_{k \ge 1}$ increases very fast and $\theta$ is a Liouville number. 

The proof of Theorem \ref{EM} and additional remarks on \cite{ErMa39} are given in Section \ref{sec:4}.  
Theorem~\ref{thm:divisor & length} is established in Section \ref{sec:2} and the other results 
are proved in Section \ref{sec:3}.

\section{Proof of Theorem~\ref{thm:divisor & length}}   \label{sec:2} 

For a prime number $\ell$, we let $v_{\ell}:\ \Q\to \Z\cup\{\infty\}$ be the additive $\ell$-adic valuation and let 
$\vert \cdot\vert_\ell = \ell^{-v_\ell (\cdot)}$ be the $\ell$-adic absolute value.

\begin{proof}[Proof of Theorem \ref{thm:divisor & length}]
First, we prove part (i). 
Let $\cN_1$ be the set of tuples 
$(m, n_1, \ldots ,n_a)$ such that: 
\begin{itemize}
\item $1\leq a\leq k$ and $n_1<n_2<\ldots<n_a$ are non-negative integers. 

\item There exist $d_1,\ldots,d_a$ in $\{1,\ldots,b-1\}$
such that $\delta:= d_a b^{n_a} + \cdots + d_1 b^{n_1}$ 
is a divisor of $u_m$ and $\delta\geq u_m^{(k-\lambda)/k+\varepsilon}$.
\end{itemize}

Assume that $\cN_1$ is infinite. 
Then, there exist an integer $h$ with $1 \le h \le k$, positive integers $D_1, \ldots , D_h$,
an infinite set $\cN_2$ of $(h+1)$-tuples 
$(m_i, n_{1,i}, \ldots , n_{h,i})$ for $i\geq 1$ such that:
\begin{itemize}
\item $n_{1,i} < \ldots < n_{h,i}$ are non-negative intergers.
\item For $i\geq 1$, $\delta_{m_i}:= D_h b^{n_{h,i}} + \cdots + D_1 b^{n_{1,i}}$ is a divisor of $u_{m_i}$
with $\delta_{m_i}\geq u_{m_i}^{(k-\lambda)/k+\varepsilon}$.
\item We have
\begin{equation}\label{eq:nji-nj-1i}
\lim_{i \to + \infty} \, (n_{j,i} - n_{j-1,i}) = + \infty, \quad j = 2, \ldots , h.  
\end{equation}
\end{itemize}

For $i\geq 1$, let $w_{m_i}$ denote the nearest integer to $u_{m_i}\xi$ and let 
\begin{equation}\label{eq:vmi}
v_{m_i}:=u_{m_i}/\delta_{m_i}\leq u_m^{\lambda/k-\varepsilon}.
\end{equation}
When $m_i$ is sufficiently large, we have:
\begin{equation}\label{eq:um and lambda}
\vert\xi D_h v_{m_i}b^{n_{h,i}}+\cdots+\xi D_1v_{m_i}b^{n_{1,i}}-w_{m_i}\vert=\Vert\xi u_{m_i}\Vert<\vert u_{m_i}\vert^{-\lambda+\varepsilon/2}
\end{equation}
thanks to the given properties of $(u_m)_{m\geq 1}$ and $\lambda$.
We are in position to apply the Schmidt Subspace Theorem. 

Let $S$ denote the set of prime divisors of $b$. 
Consider the linear forms in $\bfX = (X_0, X_1, \ldots , X_h)$ given by 
$$
L_{j, \infty} (\bfX) := X_j, \quad  j = 1, \ldots , h, \quad
$$
$$
L_{0, \infty} (\bfX) := \xi D_h X_h + \ldots + \xi D_1 X_1 - X_0, 
$$
and, for every prime number $\ell$ in $S$,
$$
 L_{j, \ell} (\bfX) := X_j, \quad  j = 0, \ldots , h. 
$$

For the tuple $\bfb_i = (w_{m_i}, v_{m_i}b^{n_{h,i}}, \ldots , v_{m_i}b^{n_{2,i}}, v_{m_i}b^{n_{1,i}})$ with a sufficiently large $m_i$, we use \eqref{eq:vmi} and \eqref{eq:um and lambda} to obtain 
\begin{align*}
\prod_{j=0}^h \, |L_{j, \infty} (\bfb_i)| 
\times \prod_{\ell \in S } \,
\prod_{j=0}^h \, |L_{j, \ell} (\bfb_i)|_{\ell}&\leq\Vert\xi u_{m_i}\Vert \cdot \vert v_{m_i}\vert^h\\
&< \vert u_{m_i}\vert^{-(h-1/2)\varepsilon} \ll H(\bfb_i)^{-(h-1/2)\varepsilon}, 
\end{align*}
where the implied constant is independent of $i$ and $H(\bfb_i)$ is the Weil height of the projective point $[w_{m_i}:v_{m_i}b^{n_{h,i}}:\ldots:v_{m_i}b^{n_{1,i}}]$.

The Subspace Theorem \cite[Corollary~7.2.5]{BG06}
implies that there exist integers $t_0, t_1, \ldots , t_h$, not all zero, and an infinite
subset $\cN_3$ of $\cN_2$ 
such that  
\begin{equation}\label{eq:the t's}
v_{m_i}(t_h b^{n_{h,i}} + \cdots + t_1 b^{n_{1,i}}) + t_0 w_{m_i} = 0\quad \text{for $(w_{m_i},n_{h,i},\ldots,n_{1,i})\in\cN_3$}.
\end{equation}
Dividing the above equation by $u_{m_i}$ and letting $i$ tend to infinity, we deduce that 
$$
\frac{t_h}{D_h} + t_0 \xi = 0.
$$
Since $\xi$ is irrational, we must have $t_0 = t_h = 0$. 
Then, we use  \eqref{eq:nji-nj-1i} and \eqref{eq:the t's} to derive that $t_1 = \ldots = t_{h-1} = 0$, a contradiction. This finishes the proof of (i).   

\medskip

We now prove part (ii) using a similar method. Let $s\geq 0$ and let $x$ be a positive integer such that $\cDC(x,b)=s$. If $s=0$, we can write
$$x=d+db+\cdots+db^n=\frac{db^{n+1}-d}{b-1}$$
with $n\geq 0$ and $d\in\{1,\ldots,b-1\}$. If $s>0$, let $0<c_1<c_2<\ldots<c_s$ denote the exponents of $b$ where digit changes take place:
\begin{align*}
x&=d_0(1+\cdots+b^{c_1-1})+d_1(b^{c_1}+\cdots+b^{c_2-1})+\cdots+d_s(b^{c_s}+\cdots+b^n)\\
&=\frac{-d_0+(d_0-d_1)b^{c_1}+(d_1-d_2)b^{c_2}+\cdots+(d_{s-1}-d_s)b^{c_s}+d_sb^{n+1}}{b-1}
\end{align*}
with $n\geq c_s$, $d_0,\ldots,d_s\in \{0,\ldots,b-1\}$, and $d_{i+1}\neq d_i$ for $0\leq i\leq s-1$. 

Let $\cN_4$ be the set of tuples $(m,n_0,n_1,\ldots,n_a)$ such that:
\begin{itemize}
\item $0\leq a\leq k+1$ and $n_0<\ldots<n_a$ are non-negative integers.
\item There exist integers $e_0,\ldots,e_a$ in $[-(b-1),b-1]$ such that $\delta:=\displaystyle\frac{e_0b^{n_0}+\cdots+e_{k+1}b^{n_{k+1}}}{b-1}$
is a divisor of $u_m$ and  $\delta\geq u_m^{(k+2-\lambda)/(k+2)+\varepsilon}$.
\end{itemize}

Assume that $\cN_4$ is infinite. Then, there exist an integer $h$ with $0 \le h \le k+1$, non-zero integers $E_0, \ldots , E_h$,
an infinite set $\cN_5$ of $(h+2)$-tuples 
$(m_i, n_{h,i}, \ldots , n_{0,i})$ for $i\geq 1$ such that:
\begin{itemize}
\item $n_{0,i}<\ldots<n_{h,i}$ are non-negative integers. \item For $i\geq 1$, $\delta_{m_i}:=\displaystyle \frac{E_hb^{n_{h,i}}+\cdots+E_0b^{n_{0,i}}}{b-1}$
is a divisor of $u_{m_i}$ with $\delta_{m_i}\geq u_{m_i}^{(k+2-\lambda)/(k+2)+\varepsilon}$.
\item We have
\begin{equation*}
\lim_{i \to + \infty} \, (n_{j,i} - n_{j-1,i}) = + \infty, \quad j = 1, \ldots , h.  
\end{equation*}
\end{itemize}
We can now apply the Subspace Theorem in essentially the same way as before to finish the proof.
\end{proof}

\section{Proof of Theorem \ref{th:Main}, Theorem \ref{thm:main}, and Theorem \ref{LeShbis}}   \label{sec:3} 
Theorem~\ref{th:Main} follows from Theorem~\ref{thm:main} since $\Vert q_k\xi\Vert<\vert q_k\vert^{-1}$.
In Theorem~\ref{LeShbis}, the equivalence (iii) $\Leftrightarrow$ (iv) and the implications 
(iv) $\Rightarrow$ (i) and (iv) $\Rightarrow$ (ii) are well-known and have already appeared in Theorem~\ref{LeSh}. The implication (ii) $\Rightarrow$ (iv) is essentially Theorem~\ref{th:Main} while
the remaining implication (i) $\Rightarrow$ (iv) follows from the inequality
$\Vert p_k/\xi\Vert\ll_\xi \vert p_k\vert^{-1}$  
and Theorem~\ref{thm:main}. We spend the rest of this section to discuss Theorem~\ref{thm:main}.
	
From now on $\N$ is the set of positive integers, $\N_0=\N\cup\{0\}$, $\bmmu$ is the group of roots of unity, and $G_{\Q}=\Gal(\Qbar/\Q)$. Let $h$ denote the absolute logarithmic Weil height on $\Qbar$. Let $k\in\N$, a tuple $(\alpha_1,\ldots,\alpha_k)$ of non-zero complex numbers is called non-degenerate if
$\alpha_i/\alpha_j\notin \bmmu$ for $1\leq i\neq j\leq k$.   
 We consider the following more general family of sequences than (non-degenerate) linear recurrence sequences:

\begin{definition} \label{def:S(K)}
Let $K$ be a number field. Let $\scrS(K)$ be the set of all sequences $(u_n)_{n \geq 1}$ 
of complex numbers with the following property. There exist $k\in \N_0$
 together with a non-degenerate tuple $(\alpha_1,\ldots,\alpha_k)\in (K^*)^k$
such that, when $n$ is sufficiently large, we can express
\begin{equation}\label{eq:S(K)}
u_n=q_{n,1}\alpha_1^n+\cdots+q_{n,k}\alpha_k^n
\end{equation}
for $q_{n,1},\ldots,q_{n,k}\in K^*$
and $\max_{1\leq i\leq k} h(q_{n,i}) = o(n)$. 
\end{definition} 

In Definition~\ref{def:S(K)}, we allow $k=0$ for which the empty sum in the RHS of \eqref{eq:S(K)} means $0$. 
Any sequence $(u_n)_{n \geq 1}$ that is eventually  $0$ is in $\scrS(K)$.

\begin{example}
Consider a linear recurrence sequence $(v_n)_{n \geq 1}$ of the form:
$$v_n=P_1(n)r_1^n+\cdots+P_k(n)r_k^n$$
with $k\in\N$, distinct $r_1,\ldots,r_k\in K^*$, and non-zero $P_1,\ldots,P_k\in K[X]$.
Let $L$ be the lcm of the order of the roots of unity that appear among the 
$r_i/r_j$ for $1\leq i,j\leq k$. Then each one of the $L$ sequences
$(v_{nL+r})_{n \geq 1}$
for $r=0,\ldots,L-1$ is a member of $\scrS(K)$.

As an explicit example, consider $v_n=2^n+(-2)^n+n$ for $n\in\N$. The sequence $(v_{2n}=2\cdot 4^n+2n)_{n \geq 1}$ is in $\scrS(\Q)$ and a tuple $(\alpha_1,\ldots,\alpha_k)$ satisfying the requirement in Definition~\ref{def:S(K)} is 
$(\alpha_1=4,\alpha_2=1)$. The sequence $(v_{2n+1}=2n+1)_{n \geq 1}$ is in $\scrS(\Q)$ and a tuple $(\alpha_1,\ldots,\alpha_k)$ satisfying the requirement in Definition~\ref{def:S(K)} is 
$(\alpha_1=1)$. 
\end{example}

\begin{lemma}\label{lem:2 data}
Let $K$ be a number field and let  $(u_n)_{n \geq 1}$ be an element of $\scrS(K)$.
Let $k,\ell\in\N_0$ and let $(\alpha_1,\ldots,\alpha_k)$ and $(\beta_1,\ldots,\beta_{\ell})$ be non-degenerate tuples of non-zero elements of $K$. Suppose that when $n$ is sufficiently large, we can express
$$u_n=q_{n,1}\alpha_1^n+\cdots+q_{n,k}\alpha_k^n=r_{n,1}\beta_1^n+\cdots+r_{n,\ell}\beta_{\ell}^n$$
for $q_{n,1},\ldots,r_{n,\ell}\in K^*$ such that 
$$
\max\{h(q_{n,i}),h(r_{n,j}):\ 1\leq i\leq k,\ 1\leq j\leq \ell\}=o(n)
$$ 
as $n$ tends to infinity.
Then $k=\ell$ and there exist a permutation $\sigma$ of $\{1,\ldots,k\}$ 
together with roots of unity $\zeta_1,\ldots,\zeta_k$ in $K$ such that
$\alpha_{i}=\zeta_i \beta_{\sigma(i)}$ for $1\leq i\leq k$ and 
$q_{n,i}\zeta_i^n=r_{n,\sigma(i)}$ for every sufficiently large $n$ and for every $1\leq i\leq k$.
\end{lemma}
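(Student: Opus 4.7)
The plan is to subtract the two expressions for $u_n$, regroup the bases according to the equivalence ``ratio is a root of unity'', and then invoke a uniqueness result for identities of the shape $\sum D_{n,t}\gamma_t^n=0$ with non-degenerate bases $\gamma_t$ and coefficients $D_{n,t}$ of height $o(n)$.

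First I would form
$$
\sum_{i=1}^k q_{n,i}\alpha_i^n - \sum_{j=1}^{\ell} r_{n,j}\beta_j^n = 0
$$
for every sufficiently large $n$, and partition the multiset $\{\alpha_1,\ldots,\alpha_k,\beta_1,\ldots,\beta_{\ell}\}$ into classes under the equivalence $\alpha\sim\beta\Leftrightarrow\alpha/\beta\in\bmmu$. The non-degeneracy of the two tuples ensures that each class contains at most one $\alpha_i$ and at most one $\beta_j$. Choosing a representative $\gamma_t\in K^*$ in each of the $N$ resulting classes, and writing $\alpha_i=\eta_i\gamma_{t(i)}$, $\beta_j=\mu_j\gamma_{s(j)}$ with $\eta_i,\mu_j\in K\cap\bmmu$, the identity becomes
$$
\sum_{t=1}^N D_{n,t}\,\gamma_t^n=0,
$$
where each $D_{n,t}$ is a signed sum of at most one term $q_{n,i}\eta_i^n$ and at most one term $r_{n,j}\mu_j^n$. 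Since roots of unity have zero Weil height and heights of sums are controlled by heights of summands, $\max_t h(D_{n,t})=o(n)$.

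The central step is then a uniqueness lemma: for pairwise non-degenerate $\gamma_1,\ldots,\gamma_N\in K^*$ and $D_{n,t}\in K$ with $\max_t h(D_{n,t})=o(n)$, the identity $\sum_{t=1}^N D_{n,t}\gamma_t^n=0$ valid for every large $n$ forces $D_{n,t}=0$ for every $t$ and every large $n$. Granted this, a class consisting of a single $\alpha_i$ would force $q_{n,i}\eta_i^n=0$, contradicting $q_{n,i}\neq 0$, and similarly singletons on the $\beta$-side are excluded. Hence every class contains exactly one $\alpha_i$ and exactly one $\beta_j$, giving a bijection $\sigma$ with $k=\ell$. Setting $\zeta_i=\alpha_i/\beta_{\sigma(i)}\in K$, the pairing $\alpha_i\sim\beta_{\sigma(i)}$ makes $\zeta_i$ a root of unity in $K$, and the identity $D_{n,t}=0$ rewrites as $q_{n,i}\zeta_i^n=r_{n,\sigma(i)}$, which is precisely the asserted conclusion.

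The main obstacle is the uniqueness lemma. I would prove it by induction on $N$: the case $N=1$ is trivial since $\gamma_1\neq 0$. For $N\geq 2$, I would argue by contradiction. After passing to a subsequence of $n$'s we may fix a subset $T\subseteq\{1,\ldots,N\}$ of size at least $2$ on which $D_{n,t}\neq 0$ for every large $n$. Dividing through by $D_{n,t_0}\gamma_{t_0}^n$ for some $t_0\in T$ yields a $(|T|-1)$-term identity
$$
\sum_{t\in T\setminus\{t_0\}}\frac{D_{n,t}}{D_{n,t_0}}\left(\frac{\gamma_t}{\gamma_{t_0}}\right)^n=-1.
$$
A Schmidt Subspace Theorem argument in the style of Evertse--Schlickewei--Schmidt (as used in the Corvaja--Zannier circle of ideas) forces a proper vanishing subsum, and the inductive hypothesis then makes some $D_{n,t}$ in that subsum vanish for infinitely many $n$, contradicting the choice of $T$. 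The hypothesis $h(D_{n,t})=o(n)$ is precisely what is needed so that the coefficient heights remain dominated by the Weil heights of the $\gamma_t^n$ when the subspace theorem is applied, and so that the error terms produced by the theorem are negligible compared with $n$.
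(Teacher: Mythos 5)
Your proposal is sound in structure, but it is worth knowing that the paper does not prove this lemma at all: its entire proof is the citation ``This follows from [Proposition~2.2, KMN19]''. What you have written is essentially a reconstruction of the content of that cited proposition. Your reduction steps are correct: subtracting the two representations, grouping the bases into classes under $\alpha\sim\beta\Leftrightarrow\alpha/\beta\in\bmmu$ (non-degeneracy of each tuple guaranteeing at most one $\alpha_i$ and one $\beta_j$ per class), and noting that the resulting coefficients $D_{n,t}$ still have height $o(n)$ because roots of unity have height zero and $h(x\pm y)\le h(x)+h(y)+\log 2$. Granting the uniqueness lemma, the extraction of $\sigma$, $\zeta_i=\alpha_i/\beta_{\sigma(i)}$ and the identity $q_{n,i}\zeta_i^n=r_{n,\sigma(i)}$ is exactly right, and the degenerate cases $k=0$ or $\ell=0$ come out correctly as well.

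The one place where your argument is genuinely incomplete is the uniqueness lemma itself, which is the entire analytic content of [KMN19, Prop.~2.2]. Two remarks. First, a formulation issue: for your induction to close you must state the lemma for $n$ ranging over an arbitrary infinite set $\cN$ (concluding $D_{n,t}=0$ for all but finitely many $n\in\cN$), not for ``every large $n$''; after the Subspace Theorem produces a vanishing proper subsum only along an infinite subsequence, the version quantified over all large $n$ cannot be fed back into the induction. Second, the key step --- that the relation $\sum_{t\in T\setminus\{t_0\}}(D_{n,t}/D_{n,t_0})(\gamma_t/\gamma_{t_0})^n=-1$ forces a vanishing proper subsum for infinitely many $n$ --- is only gestured at. It does work, and the reason the hypotheses are exactly right is worth recording: choosing $S$ to contain all places where some $\gamma_t$ is not a unit makes each $\gamma_t^n$ an $S$-unit, the perturbations $D_{n,t}$ have height $o(n)$, and Kronecker's theorem gives $h(\gamma_t/\gamma_{t_0})>0$, so the heights of the ratios grow linearly in $n$ while the non-$S$-unit contribution is $o(n)$; this is precisely the regime in which the Evertse--Schlickewei--Schmidt-type subspace argument applies. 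If you intend this as a self-contained proof rather than a re-derivation, that step needs to be written out (or, as the authors do, replaced by the citation).
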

\begin{proof}
This follows from \cite[Proposition 2.2]{KMN19}. 
\end{proof}

\begin{definition}
Let $K$ be a number field and let $(u_n)_{n \geq 1}$ be in $\scrS(K)$. Let $(\alpha_1,\ldots,\alpha_k)$ satisfy the requirement in Definition~\ref{def:S(K)}. We call $k$ the number of $\scrS(K)$-roots of $(u_n)_{n \geq 1}$; this is well-defined thanks to Lemma~\ref{lem:2 data}. We call $(\alpha_1,\ldots,\alpha_k)$ a tuple of $\scrS(K)$-roots of $(u_n)_{n \geq 1}$; this is well-defined up to permuting the $\alpha_i$'s and multiplying each $\alpha_i$ by a root of unity in $K$. 
\end{definition}

Here is the reason why we use the strange terminology ``$\scrS(K)$-roots'' instead of the usual ``characteristic roots''. In the theory of linear recurrence sequences, we have the well-defined notion of characteristic roots. For example, \emph{the} characteristic roots of $(u_n=2^n+1)_{n \geq 1}$ are $2$ and $1$. When regarding $(u_n)_{n \geq 1}$ as an element of $\scrS(K)$, we may say that any tuple $(2\zeta,\zeta')$
where $\zeta$ and $\zeta'$ are roots of unity in $K$ is a tuple of $\scrS(K)$-roots of $(u_n)_{n \geq 1}$.

\begin{definition}\label{def:admissible scrS}
Let $K$ be a number field. Let $(u_n)_{n \geq 1}$ be an element of $\scrS(K)$ and let $k\in\N_0$  be its number of $\scrS(K)$-roots. We say that $(u_n)_{n \geq 1}$ is admissible if:
\begin{itemize}
	\item either $k=0$ i.e. $(u_n)_{n \geq 1}$ is eventually $0$
	\item or $k>0$ and at least one entry in a tuple of $\scrS(K)$-roots of $(u_n)_{n \geq 1}$ is not a root of unity.
\end{itemize}  
\end{definition}

Since every non-degenerate linear recurrence sequence of algebraic numbers that is not a polynomial sequence is an admissible element of $\scrS(K)$ for some number field $K$, Theorem~\ref{thm:main} follows from the below theorem.

\begin{theorem}\label{thm:1/2}
Let $\xi$ be an algebraic number of degree $d\geq 3$.
Let $\varepsilon>0$ and let $K$ be a number field. Let $(u_n)_{n \geq 1}$ be a sequence of integers that is also an admissible element of $\scrS(K)$. Then the set
$$\left\{n\in\N:\ u_n\neq 0\ \text{and}\ \Vert u_n\xi\Vert < \frac{1}{\vert u_n\vert^{(1/(d-1))+\varepsilon}}\right\}$$ 
is finite.
\end{theorem}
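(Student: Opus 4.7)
The plan is to argue by contradiction, following Corvaja--Zannier~\cite{CZ04} and Kulkarni--Mavraki--Nguyen~\cite{KMN19}, with the Schmidt Subspace Theorem as the main tool. Assume the exceptional set is infinite; along an infinite subset $\cN\subset\N$, we have $u_n\neq 0$ and $|p_n-\xi u_n|<|u_n|^{-1/(d-1)-\varepsilon}$, where $p_n$ denotes the nearest integer to $u_n\xi$. Enlarge $K$ to a number field $L$ containing all conjugates $\xi=\xi^{(1)},\ldots,\xi^{(d)}$ of $\xi$. By admissibility, for $n$ large we may write
\[
u_n=\sum_{i=1}^k q_{n,i}\alpha_i^n
\]
with $(\alpha_1,\ldots,\alpha_k)\in(L^*)^k$ non-degenerate, $q_{n,i}\in L^*$, and $h(q_{n,i})=o(n)$. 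Choose a finite set $S$ of places of $L$ containing every archimedean place and every finite place at which some $\alpha_i$ is not a unit, so that the $\alpha_i$ are $S$-units.

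I would then apply the Subspace Theorem to the points $\bfx_n=(p_n,q_{n,1}\alpha_1^n,\ldots,q_{n,k}\alpha_k^n)\in L^{k+1}$ with the following linear forms. At each archimedean place $v$ of $L$, take $L_{0,v}(\bfX)=X_0-\sigma_v(\xi)(X_1+\cdots+X_k)$ and $L_{i,v}(\bfX)=X_i$ for $i\geq 1$; at each finite $v\in S$, take $L_{i,v}(\bfX)=X_i$ for $i=0,\ldots,k$. At the archimedean place $v_1$ extending the standard embedding $\xi\mapsto\xi$, $L_{0,v_1}(\bfx_n)=p_n-\xi u_n$ is very small, while at every other place the ratios $\|L_{i,v}(\bfx_n)\|_v/\|\bfx_n\|_v$ are bounded. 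Combining this smallness at $v_1$ with the product formula applied to the $q_{n,i}\alpha_i^n$ (using the $S$-unit property of $\alpha_i$ and $h(q_{n,i})=o(n)$) should yield a bound on the SST product of order $|u_n|^{-1-1/(d-1)-\varepsilon+o(1)}$, hence at most $H(\bfx_n)^{-\delta}$ for some $\delta>0$, since $H(\bfx_n)$ grows only polynomially in $|u_n|$. The exponent $1/(d-1)$ is precisely the threshold making this SST inequality go through.

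The Subspace Theorem then produces $(t_0,\ldots,t_k)\in L^{k+1}\setminus\{0\}$ and an infinite $\cN'\subset\cN$ with
\[
t_0p_n+\sum_{i=1}^k t_iq_{n,i}\alpha_i^n=0\quad\text{for all }n\in\cN'.
\]
If $t_0=0$, the vanishing sum $\sum_{i\in I}t_iq_{n,i}\alpha_i^n=0$, with $I=\{i:t_i\neq 0\}$, expresses the zero sequence in $\scrS(L)$ with non-degenerate roots, and Lemma~\ref{lem:2 data} forces $I=\emptyset$, a contradiction. If $t_0\neq 0$, set $s_i=-t_i/t_0\in L$ and write $p_n=\sum_i s_iq_{n,i}\alpha_i^n$; subtracting $\xi u_n=\xi\sum_iq_{n,i}\alpha_i^n$ leaves the small combination $\sum_i(s_i-\xi)q_{n,i}\alpha_i^n=p_n-\xi u_n$. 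Iterating the SST argument with this new sum (whose active set $\{i:s_i\neq\xi\}$ is strictly smaller at each step, since $s_i=\xi$ for all $i$ would give $p_n=\xi u_n\in\Z$ and force $u_n=0$) terminates after finitely many rounds and yields the desired contradiction, with the admissibility hypothesis used to rule out the degenerate case in which all remaining $\alpha_i$ are roots of unity.

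The main obstacle will be the quantitative estimate for the SST product: carefully bounding the contributions from archimedean places other than $v_1$ and from finite places in $S$ requires delicate use of the product formula, the $S$-unit property of the $\alpha_i$, and $h(q_{n,i})=o(n)$; the specific exponent $1/(d-1)$ is exactly the critical value. A secondary difficulty is managing the iteration, ensuring that each application of SST strictly reduces the number of active roots -- with Lemma~\ref{lem:2 data} of~\cite{KMN19} as the bookkeeping device -- and verifying that the final contradiction genuinely uses both the admissibility of $(u_n)$ and the hypothesis $d\geq 3$.
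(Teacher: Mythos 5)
Your overall strategy (argue by contradiction and use the Subspace Theorem in the style of Corvaja--Zannier and Kulkarni--Mavraki--Nguyen) points in the right direction, but the two steps that actually carry the theorem are absent, and the step you do propose does not close. The quantitative SST estimate you defer to is not merely ``delicate'': with your forms it fails, and $1/(d-1)$ is not the threshold of any such inequality. You have smallness of a single form ($L_{0,v_1}$) at a single place; at every other archimedean place $v$ of $L$ the form $X_0-\sigma_v(\xi)(X_1+\cdots+X_k)$ evaluates to $p_n-\sigma_v(\xi)u_n$, of size comparable to $|u_n|$, so after normalization the total saving is only of order $|u_n|^{-(1+1/(d-1))\lambda}$ with $\lambda=d_{v_1}/[L:\Q]\le 2/d$ --- far short of beating the height. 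The degree $d$ of $\xi$ enters the paper's proof by an entirely different mechanism, which your sketch never touches: one first invokes Theorem~\ref{thm:KMN with C} (the black box from \cite{KMN19}, which already packages all the SST work) to learn that the $\alpha_i$ are algebraic integers, that $\{\alpha_1,\dots,\alpha_k\}$ is Galois-stable with $\sigma(q_{n,i})=q_{n,j}$ whenever $\sigma(\alpha_i)=\alpha_j$, and then --- applying it a second time to the sum truncated to the roots of modulus at least $M^{-(1/(d-1)+\delta)}$, with coefficients $\xi q_{n,i}$ --- that any $\sigma$ carrying one ``large'' root to another must fix $\xi$. Since $[\Q(\xi):\Q]=d$, at most a $1/d$ fraction of the conjugates of $\alpha_1$ can be large, so at least $d'(d-1)/d$ of them have modulus below $M^{-(1/(d-1)+\delta)}$, forcing $|\Norm_{\Q(\alpha_1)/\Q}(\alpha_1)|<1$ and contradicting integrality. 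That norm computation is exactly where the exponent $1/(d-1)$ is spent.

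Second, your iteration does not terminate in a contradiction. After the Subspace Theorem hands you $p_n=\sum_i s_iq_{n,i}\alpha_i^n$ and you pass to $\sum_i(s_i-\xi)q_{n,i}\alpha_i^n=p_n-\xi u_n$, the process can legitimately halt with every surviving root of modulus strictly less than $1$: admissibility only guarantees that some \emph{original} root is not a root of unity, not that the surviving ones are large. The paper's own sharpness example for $d=3$ (a cubic unit $\xi$ with $u_n=\xi^n+\alpha^n+\bar{\alpha}^n$ and $|\alpha|=|\xi|^{-1/2}$) realizes precisely this terminal state, with $s_1=\xi$, $s_2=\alpha$, $s_3=\bar{\alpha}$ and a residual sum that is genuinely small for all $n$ without any contradiction. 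The $\varepsilon$ in the exponent must therefore be consumed by a count of how many conjugates can be that small --- the Galois/norm argument above --- and cannot be extracted from the bookkeeping of Lemma~\ref{lem:2 data} alone.
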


By a sublinear function, we mean a function $f:\ \N\rightarrow (0,\infty)$ 
such that $\displaystyle\lim_{n \to \infty} f(n)/n =0$, that is, $f(n) = o(n)$. The following theorem is established in \cite{KMN19}. 

\begin{theorem}\label{thm:KMN}
	Let $K$ be a number field, let $k\in\N$,
	let $(\alpha_1,\ldots,\alpha_k)$ be a non-degenerate
	tuple of algebraic numbers satisfying
	$\vert\alpha_i\vert \geq 1$
	for $1\leq i\leq k$, and let $f$ be a sublinear function.
	Assume that for some $\theta\in (0,1)$,
	the set $\cM$ of $(n,q_1,\ldots,q_k)\in \N\times (K^*)^{k}$ satisfying:
	$$\left\Vert \sum_{i=1}^k q_i\alpha_i^n\right\Vert <\theta^n\ \text{and}\ \max_{1\leq i\leq k} h(q_i)<f(n)$$
	is infinite. Then
\begin{itemize} 
		\item [(i)] $\alpha_i$ is an algebraic integer for
		$i=1,\ldots,k$.
		\item [(ii)] For each $\sigma\in G_{\Q}$ and $i=1,\ldots,k$ such that $\displaystyle\frac{\sigma(\alpha_i)}{\alpha_j}\notin \bmmu$ for $j=1,\ldots,k$ we have
		$\vert \sigma(\alpha_i)\vert<1$. 
\end{itemize}
Moreover, for all but finitely
	many $(n,q_1,\ldots,q_k)\in \cM$ we have: for $(\sigma,i,j)\in G_{\Q}\times \{1,\ldots,k\}^2$, $\sigma(q_i\alpha_i^n)=q_j\alpha_j^n$ if and only if $\displaystyle \frac{\sigma(\alpha_i)}{\alpha_j}\in\bmmu$.
\end{theorem}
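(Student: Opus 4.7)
The plan is to apply the $S$-adic Schmidt Subspace Theorem to the $(k+1)$-tuples
$$\bfx_n := (p_n,\, q_1\alpha_1^n,\, \ldots,\, q_k\alpha_k^n) \in L^{k+1},$$
where $p_n\in\Z$ is the nearest integer to $\sum_{i=1}^k q_i\alpha_i^n$ and $L$ is a Galois extension of $\Q$ containing $K$ and all Galois conjugates of $\alpha_1,\ldots,\alpha_k$. Let $S$ be a finite set of places of $L$ including all archimedean places together with every finite place at which some $\alpha_i$ fails to be a unit; fix an archimedean place $v_0$ of $L$ extending the standard place of $\Q$.

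At $v_0$, I take the linear forms $L_{0,v_0}(\bfX) := X_0 - X_1 - \cdots - X_k$ and $L_{i,v_0}(\bfX) := X_i$ for $1 \leq i \leq k$; at every other $v \in S$, I take the coordinate forms $L_{i,v}(\bfX) := X_i$ for $0 \leq i \leq k$. The key estimate is $|L_{0,v_0}(\bfx_n)|_{v_0} < \theta^n$, while for each $i \geq 1$ the product formula applied to $q_i\alpha_i^n \in L^*$, combined with the fact that $\alpha_i$ is a unit outside $S$, yields $\prod_{v\in S}|q_i\alpha_i^n|_v = \prod_{v\notin S}|q_i|_v^{-1} \leq e^{o(n)}$ thanks to $h(q_i)=o(n)$. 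Comparing with $H(\bfx_n) \ll \bigl(\max_i|\alpha_i|_{v_0}\bigr)^{n(1+o(1))}$, the full double product is bounded by $H(\bfx_n)^{-\varepsilon}$ for some $\varepsilon>0$, so the Subspace Theorem yields an infinite subset of $\cM$ on which $\bfx_n$ lies in a fixed proper linear subspace, giving a nontrivial relation $c_0 p_n + \sum_i c_i q_i \alpha_i^n = 0$.

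To prove (i), suppose $\alpha_{i_0}$ is not an algebraic integer; then there is a finite place $w$ of $L$ with $|\alpha_{i_0}|_w > 1$. Including $w$ in $S$ and using the modified linear form $L_{0,w}(\bfX) := X_0 - X_1 - \cdots - X_k$ at $w$ exploits $|p_n|_w \leq 1$ together with the growth of $|\alpha_{i_0}^n|_w$, producing additional exponential decay that, via Lemma~\ref{lem:2 data}, contradicts non-degeneracy. For (ii), augment the tuple by the Galois conjugate coordinate $\sigma(q_{i_0})\sigma(\alpha_{i_0})^n$ at the archimedean place coming from $\sigma$: if $|\sigma(\alpha_{i_0})| \geq 1$ and $\sigma(\alpha_{i_0})/\alpha_j \notin \bmmu$ for every $j$, this introduces a genuinely new direction of exponential size, so applying the Subspace Theorem to the enlarged tuples and then invoking the uniqueness of $\scrS(L)$-root representations from Lemma~\ref{lem:2 data} yields a contradiction. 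The Moreover clause follows by iterating this Galois-equivariant analysis: on a cofinite subset of $\cM$, every Galois identification among the terms $q_i\alpha_i^n$ must already be visible as a $\bmmu$-equivalence of the corresponding $\scrS(L)$-roots.

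The main obstacle is executing the Subspace Theorem estimate at several places simultaneously while controlling the height of $\bfx_n$ precisely: the exponential gain $\theta^n$ at $v_0$ must dominate all other contributions, which depends critically on the sublinear growth $h(q_i)=o(n)$ and on the choice of $S$ absorbing every non-unit place of every $\alpha_i$. A secondary difficulty is refining the Subspace Theorem's ``finite union of proper subspaces'' output into the sharp arithmetic structure of (i), (ii), and the Moreover clause—this typically proceeds by induction on $k$, peeling off groups of $\alpha_i$'s related by $\bmmu$-multiples one at a time, with non-degeneracy ensuring each step genuinely reduces complexity rather than spawning vacuous relations.
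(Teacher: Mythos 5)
First, a point of comparison: the paper does not prove Theorem~\ref{thm:KMN} at all — it states it as "established in \cite{KMN19}" and simply cites that work. So your blind attempt is being measured against a citation rather than an in-paper argument. Your sketch does follow the right general strategy, namely the Corvaja--Zannier method of feeding the vector $(p_n,q_1\alpha_1^n,\ldots,q_k\alpha_k^n)$ into the $S$-adic Subspace Theorem, but as written it has gaps at precisely the points where the real work of \cite{KMN19} lies.

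The central problem is your height estimate $H(\bfx_n)\ll\bigl(\max_i|\alpha_i|_{v_0}\bigr)^{n(1+o(1))}$ and the companion claim that the double product is $\le H(\bfx_n)^{-\varepsilon}$. The projective height of $\bfx_n$ collects contributions from \emph{every} place of $L$, so it grows like $e^{n(\max_i h(\alpha_i)+o(1))}$, and $h(\alpha_i)$ can vastly exceed $\log|\alpha_i|_{v_0}$ when $\alpha_i$ has large conjugates or denominators. Moreover, with coordinate forms $X_i$ at every place $v\ne v_0$, the factor $\prod_{v\mid\infty,\,v\ne v_0}|p_n|_v$ contributes a positive power of $|p_n|$, which is itself exponentially large; the single gain $\theta^n$ at $v_0$ cannot absorb this. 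This is not a technicality: the whole content of conclusions (i) and (ii) is that the approximation hypothesis \emph{forces} the unrelated conjugates $\sigma(\alpha_i)$ to be small and the $\alpha_i$ to be integral, and in \cite{KMN19} (as in \cite{CZ04}) this is extracted by placing a "small" linear form of the shape $X_0-\sum_i X_{\sigma,i}$ at each conjugate archimedean place $v_\sigma$, with all Galois conjugates $\sigma(q_i)\sigma(\alpha_i)^n$ included as coordinates, and then running an inductive descent on the linear relations the Subspace Theorem returns. Your final paragraph correctly identifies this descent as necessary but defers it; together with the unproved height bound, that is the heart of the proof rather than a secondary difficulty. Your device for (ii) — augmenting the tuple by a large conjugate coordinate to force a contradiction — also points the wrong way: adding an exponentially large coordinate increases the height and makes the Subspace inequality harder, not easier, to satisfy. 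Since the paper itself simply invokes \cite{KMN19} here, the honest course is to do the same rather than to reprove the theorem from scratch.
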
		

For the proof of Theorem~\ref{thm:1/2}, we need a slightly more flexible version:
\begin{theorem}\label{thm:KMN with C}
Let $C\in (0,1]$. Let $K$ be a number field, let $k\in\N$,
	let $(\alpha_1,\ldots,\alpha_k)$ be a non-degenerate
	tuple of algebraic numbers satisfying
	$\vert\alpha_i\vert \geq C$
	for $1\leq i\leq k$, and let $f$ be a sublinear function.
	Assume that for some $\theta\in (0,C)$,
	the set $\cM$ of $(n,q_1,\ldots,q_k)\in \N\times (K^*)^{k}$ satisfying:
	$$\left\Vert \sum_{i=1}^k q_i\alpha_i^n\right\Vert <\theta^n\ \text{and}\ \max_{1\leq i\leq k} h(q_i)<f(n)$$
	is infinite. Then
\begin{itemize} 
		\item [(i)] $\alpha_i$ is an algebraic integer for
		$i=1,\ldots,k$.
		\item [(ii)] For each $\sigma\in G_{\Q}$ and $i=1,\ldots,k$ such that $\displaystyle\frac{\sigma(\alpha_i)}{\alpha_j}\notin \bmmu$ for $j=1,\ldots,k$ we have
		$\vert \sigma(\alpha_i)\vert<C$. 
\end{itemize}
Moreover, for all but finitely
	many $(n,q_1,\ldots,q_k)\in \cM$ we have: for $(\sigma,i,j)\in G_{\Q}\times \{1,\ldots,k\}^2$, $\sigma(q_i\alpha_i^n)=q_j\alpha_j^n$ if and only if $\displaystyle \frac{\sigma(\alpha_i)}{\alpha_j}\in\bmmu$.
\end{theorem}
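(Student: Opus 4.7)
The plan is to replicate the proof of Theorem~\ref{thm:KMN} from \cite{KMN19} with the constant $1$ systematically replaced by $C$. The hypothesis $|\alpha_i|\geq 1$ and $\theta\in(0,1)$ in Theorem~\ref{thm:KMN} is used only to guarantee a strict archimedean gap $\theta<1\leq |\alpha_i|$; in our setting, the analogous gap $\theta<C\leq |\alpha_i|$ holds by assumption, and nothing in the Subspace Theorem machinery or the $\mathfrak{p}$-adic analysis is intrinsically tied to the value $1$.

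Concretely, for each $(n,q_1,\ldots,q_k)\in \cM$ I would write $\sum_{i=1}^k q_i\alpha_i^n = p_n + \epsilon_n$ with $p_n\in\Z$ the nearest integer and $|\epsilon_n|<\theta^n$, then apply the Schmidt Subspace Theorem to the projective point $\bfb_n = [p_n : q_1\alpha_1^n : \cdots : q_k\alpha_k^n]$ using the same system of linear forms as in \cite{KMN19} at a sufficiently large finite set of places of $K$. The distinguished archimedean form $L_\infty(\bfX) = X_0 - \sum_{i=1}^k X_i$ takes the small value $\epsilon_n$ at $\bfb_n$, while the remaining forms are coordinate forms. A standard height computation bounds $H(\bfb_n)$ from below by roughly $C^n$ up to sub-exponential factors (thanks to $|\alpha_i|\geq C$ and $h(q_i)=o(n)$), whereas the product of absolute values of the chosen linear forms is bounded above by $\theta^n$ up to sub-exponential factors. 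Since $\theta<C$, this yields a decay of the form $H(\bfb_n)^{-\delta}$ for some $\delta>0$ and invokes the Subspace Theorem.

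The resulting exceptional linear subspace contains infinitely many $\bfb_n$; after passing to an infinite subset of $\cM$ and invoking Lemma~\ref{lem:2 data} together with the non-degeneracy of $(\alpha_1,\ldots,\alpha_k)$, we recover conclusion (i) via the same $\mathfrak{p}$-adic argument as in \cite{KMN19} (algebraic integrality is a place-by-place property independent of the archimedean constant $C$), and conclusion (ii) via a conjugate-comparison argument: for $\sigma\in G_\Q$ with $\sigma(\alpha_i)/\alpha_j\notin \bmmu$ for all $j$, non-degeneracy forces $|\sigma(\alpha_i)|^n$ to be controlled by $\theta^n\cdot\exp(o(n))$ for infinitely many $n$, so $|\sigma(\alpha_i)|\leq\theta<C$. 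The ``moreover'' clause is unchanged from the original. The principal obstacle is one of bookkeeping: one must audit each quantitative estimate in \cite{KMN19} to confirm that the constant $1$ enters solely as the lower bound for $|\alpha_i|$ rather than intrinsically; since this bound appears only in the archimedean height computations, the verification is routine and no new ideas are required.
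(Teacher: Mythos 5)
Your route is genuinely different from the paper's, and the difference matters. The paper does not reopen the proof in \cite{KMN19} at all: it reduces Theorem~\ref{thm:KMN with C} to Theorem~\ref{thm:KMN} by a rescaling trick. One picks $N$ large enough that $\theta^{-N}>3C^{-N}$, chooses a prime $D\in[C^{-N},\theta^{-N})$ not dividing the denominator of any $\alpha_i$, and sets $\beta_i=D\alpha_i^N$. Then $|\beta_i|\geq DC^N\geq 1$, and writing $n=mN+r$ one checks that $\left\Vert\sum_i (q_i\alpha_i^r)\beta_i^m\right\Vert<\theta'^m$ for a suitable $\theta'\in(D\theta^N,1)$, with coefficients $q_i\alpha_i^r$ still of sublinear height. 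Theorem~\ref{thm:KMN} applied to $(\beta_1,\ldots,\beta_k)$ then yields everything: integrality of $D\alpha_i^N$ together with the coprimality of $D$ to the denominators forces $\alpha_i$ to be an algebraic integer, and $|\sigma(D\alpha_i^N)|<1$ gives $|\sigma(\alpha_i)|<D^{-1/N}\leq C$.

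By contrast, your proposal defers the entire burden to the assertion that in \cite{KMN19} the constant $1$ enters ``solely as the lower bound for $|\alpha_i|$'' and that the audit is ``routine.'' That assertion is precisely what needs proof, and it is not obviously true for part (i): algebraic integrality is a statement at the finite places, but in the Subspace Theorem argument the finite-place and archimedean contributions are coupled through the height of $\bfb_n$ and through the requirement that the product of the linear forms beat $H(\bfb_n)^{-\delta}$; lowering the archimedean floor from $1$ to $C$ changes that balance, and the paper's own argument has to do real work (via the choice of $D$) to recover integrality rather than getting it for free. Your sketch of (ii) also overshoots: you claim $|\sigma(\alpha_i)|\leq\theta$, which is stronger than what Theorem~\ref{thm:KMN} itself delivers (only $<1$ there), a sign that the conjugate-comparison step is being guessed rather than tracked. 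So as written there is a gap: the load-bearing verification is asserted, not carried out. The reduction described above closes it cleanly and is the paper's intended argument.
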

	
\begin{remark}
Theorem~\ref{thm:KMN with C} in the case $C=1$ is exactly Theorem~\ref{thm:KMN}.
\end{remark}	

\begin{proof}[Proof of Theorem~\ref{thm:KMN with C}]
When $n$ is fixed, there are only finitely many tuples 
$(n,q_1,\ldots,q_k)$ in $\cM$ thanks to the upper bound on $\max h(q_i)$ 
and Northcott's property.    
In the following, for $(n,q_1,\ldots,q_k)$ in $\cM$, we tacitly assume that $n$ is sufficiently large.



For $N$ large enough, we have $1 / \theta^N > 3 / C^N$ and the interval $[1/C^N,1/\theta^N)$ contains 
a prime number $D$ which does not divide the denominator of $\alpha_i$ for $i=1, \ldots , k$. We have
$$D\theta^N<1\leq DC^N.$$
Fix $\theta'\in (D\theta^N,1)$. Let $\beta_i=D\alpha_i^N$ for $1\leq i\leq k$. We now define the set $\cM'$ as follows. 
Consider $(n,q_1,\ldots,q_k)\in\cM$ with $n\equiv r$ mod $N$, write $n=mN+r$ with $r\in \{0,\ldots,N-1\}$, then we have
$$
\left\Vert\sum_{i=1}^k q_i\alpha_i^r \beta_i^m\right\Vert=\left\Vert\sum_{i=1}^k q_i\alpha_i^r(D\alpha_i^N)^m\right\Vert< D^m\theta^n=\theta^r (D\theta^N)^m<\theta'^m,
$$
assuming $n$ and hence $m$ are sufficiently large so that 
the last inequality holds thanks to the choice $\theta'\in (D \theta^n,1)$.  
We include the tuple $(m,q_1\alpha_1^r,\ldots,q_k\alpha_k^r)$ in $\cM'$. Finally, consider the sublinear function
$$
g(n)=f(n)+(N-1)\max_{1\leq i\leq k}h(\alpha_i),
$$
so that $\max_{1\leq i\leq k} h(q_i\alpha_i^r)<g(n)$.

We apply Theorem~\ref{thm:KMN} for the tuple $(\beta_1,\ldots,\beta_k)$, the function $g$, the number $\theta'$, and the set $\cM'$ to conclude that:
\begin{itemize}
\item $D\alpha_i^N$ is an algebraic integer for $1\leq i\leq k$. 
Our choice of $D$ implies that 
$\alpha_i$ is an algebraic integer for $1\leq i\leq k$. 
\item For each $\sigma\in G_{\Q}$ and $i\in\{1,\ldots,k\}$ such that $\displaystyle\frac{\sigma(\alpha_i)}{\sigma(\alpha_j)}\notin \bmmu$ for every $j\in\{1,\ldots,k\}$, we have $\sigma(D\alpha_i^N)<1$ consequently
$\sigma(\alpha_i)<1/D^{1/N}\leq C$.
\item The last assertion of Theorem~\ref{thm:KMN with C} holds.
\end{itemize}
This finishes the proof.
\end{proof}


%

\begin{proof}[Proof of Theorem~\ref{thm:1/2}]
Let $k$ denote the number of $\scrS(K)$-roots of $(u_n)_{n \geq 1}$. The case $k=0$ (i.e. $(u_n)_{n \geq 1}$ is eventually $0$) is obvious. Assume $k>0$ and let $(\alpha_1,\ldots,\alpha_k)$ be 
a tuple of $\scrS(K)$-roots of $(u_n)_{n \geq 1}$. For $L\in\N$ and $r\in \{0,\ldots,L-1\}$, each  sequence $(u_{nL+r})_{n \geq 1}$ is an admissible element of $\scrS(K)$ and admits $(\alpha_1^L,\ldots,\alpha_k^L)$ as a tuple of $\scrS(K)$-roots. Let $L$ be the lcm of the order of roots of unity among the
$\sigma(\alpha_i)/\tau(\alpha_j)$ for $\sigma,\tau\in G_{\Q}$ and $1\leq i,j\leq k$ and replace $(u_n)_{n \geq 1}$ by each $(u_{nL+r})_{n \geq 1}$, we may assume:
\begin{equation}\label{eq:reduction}
\text{for $\sigma,\tau\in G_{\Q}$ and $1\leq i,j\leq k$, $\frac{\sigma(\alpha_i)}{\tau(\alpha_j)}\in\bmmu$ iff $\sigma(\alpha_i)=\tau(\alpha_j)$.}
\end{equation}
We first prove that the set $\{\alpha_1,\ldots,\alpha_k\}$ is Galois invariant.

For sufficiently large $n$, express
$$u_n=q_{n,1}\alpha_1^n+\cdots+q_{n,k}\alpha_k^n$$      
as in Definition~\ref{def:S(K)}. Let $\sigma\in G_{\Q}$, since $u_n\in\Z$ we have:
$$
q_{n,1}\alpha_1^n+\cdots+q_{n,k}\alpha_k^n=\sigma(q_{n,1})\sigma(\alpha_1)^n+\cdots+\sigma(q_{n,k})\sigma(\alpha_k)^n
$$
for all large $n$. From \cite[Proposition 2.2]{KMN19}, we have that for every $i\in\{1,\ldots,k\}$ there exists
$j\in\{1,\ldots,k\}$ such that $\sigma(\alpha_i)/\alpha_j\in\bmmu$ and this gives
$\sigma(\alpha_i)=\alpha_j$ thanks to \eqref{eq:reduction}. 
Theorem~\ref{thm:KMN with C} implies that the $\alpha_i$'s are algebraic integers and  for every sufficiently large $n$, for $(\sigma,i,j)\in G_{\Q}\times \{1,\ldots,k\}^2$ we have: 
\begin{equation}\label{eq:sigma(qni)=qnj}
\sigma(q_{n,i})=q_{n,j}\ \text{whenever}\ \sigma(\alpha_i)=\alpha_j.
\end{equation}
Since
$(u_n)_{n \geq 1}$ is admissible, at least one of the $\alpha_i$'s is not a root of unity and hence 
\begin{equation}\label{eq:M>1}
M:=\max_{1\leq i\leq k}\vert \alpha_i\vert > 1.
\end{equation}

Suppose the set 
$$T:=\left\{n\in\N:\ u_n\neq 0\ \text{and}\ \Vert u_n\xi\Vert < \frac{1}{\vert u_n\vert^{(1/(d-1))+\varepsilon}}\right\}$$
is infinite then we will arrive at a contradiction.
Let $\delta$ denote a sufficiently small positive real number that will be specified later. By  
\cite[Section~2]{KMN19}, we have:
\begin{equation}
\vert u_n\vert > M^{(1-\delta)n}
\end{equation}
for all large $n$. Therefore
\begin{equation}\label{eq:(1-delta)(1/2+epsilon)}
\Vert \xi q_{n,1}\alpha_1^n+\cdots+\xi q_{n,k}\alpha_k^n\Vert < \frac{1}{M^{(1-\delta)(1/(d-1)+\varepsilon)n}}
\end{equation}
for all large $n$ in $T$.

We relabel the $\alpha_i$'s and let $m\leq \ell\leq k$ such that:
\begin{itemize}
\item [(i)] $\vert\alpha_1\vert=M$.
\item [(ii)] $\displaystyle\vert \alpha_i\vert \geq \frac{1}{M^{1/(d-1)+\delta}}$ for $1\leq i\leq \ell$ while
$\displaystyle\vert \alpha_i\vert <\frac{1}{M^{1/(d-1)+\delta}}$ for $\ell+1\leq i\leq k$. 
\item [(iii)] Among the $\alpha_1,\ldots,\alpha_\ell$,     
we have that $\alpha_1,\ldots,\alpha_m$ are exactly the Galois conjugates of $\alpha_1$. When combining with (ii), this means that $\alpha_1,\ldots,\alpha_m$ are precisely the Galois conjugates of $\alpha_1$ with modulus at least $M^{-(1/(d-1)+\delta)}$.
\end{itemize}

We require $\delta$ small enough so that:
\begin{equation}\label{eq:delta ineq 1}
1/(d-1)+\delta<(1-\delta)(1/(d-1)+\varepsilon).
\end{equation}

Choose the real number $c$ such that:
\begin{align}\label{eq:c ineq}
\begin{split}
1/(d-1)+\delta<c<(1-\delta)(1/(d-1)+\varepsilon)\\
\text{and}\ \vert\alpha_i\vert<\frac{1}{M^c}\ \text{for $\ell+1\leq i\leq k$.}
\end{split}
\end{align}

Thanks to this choice of $c$ and the assumption that $h(q_{n,i})=o(n)$ for $1\leq i\leq k$, we have
\begin{equation}
\vert \xi q_{n,\ell+1}\alpha_{\ell+1}^n+\cdots+\xi q_{n,k}\alpha_{k}^n\vert <\frac{1}{2M^{cn}},
\end{equation}
for all sufficiently large $n$. From \eqref{eq:(1-delta)(1/2+epsilon)} and \eqref{eq:c ineq}, we have
\begin{equation}
\Vert \xi q_{n,1}\alpha_1^n+\cdots+\xi q_{n,k}\alpha_k^n\Vert < \frac{1}{2M^{cn}},
\end{equation}
for all large $n$ in $T$. Combining the above inequalities, we have
\begin{equation}\label{eq:key to use KMN}
\Vert \xi q_{n,1}\alpha_1^n+\cdots+\xi q_{n,\ell}\alpha_{\ell}^n\Vert < \frac{1}{M^{cn}},
\end{equation}
for all large $n$ in $T$.

Let $F$ be the Galois closure of $K(\xi)$. We apply Theorem~\ref{thm:KMN with C} for the tuple
$(\alpha_1,\ldots,\alpha_{\ell})$, 
 $C=M^{-(1/(d-1)+\delta)}$, $\theta=M^{-c}$, and the inequality \eqref{eq:key to use KMN} then use 
 \eqref{eq:reduction} and \eqref{eq:sigma(qni)=qnj} 
  to have that for every large $n$ in $T$, $\sigma\in \Gal(F/\Q)$,
 and $1\leq i,j\leq \ell$,
 \begin{equation}\label{eq:sigma(xi)=xi}
 \text{if $\sigma(\alpha_i)=\alpha_j$ then $\sigma(\xi q_{n,i}\alpha_i^n)=\xi q_{n,j}\alpha_j^n$
 and hence $\sigma(\xi)=\xi$.} 
 \end{equation}
Since $\alpha_1,\ldots,\alpha_m$ are exactly the Galois conjugates of $\alpha_1$ among the $\alpha_1,\ldots,\alpha_{\ell}$, 
\eqref{eq:sigma(xi)=xi} implies that $\xi$ is fixed 
by at least $m\vert \Gal(F/\Q(\alpha_1))\vert=m[F:\Q(\alpha_1)]$ many automorphisms in $\Gal(F/\Q)$. 
Put $d'=[\Q(\alpha_1):\Q]$, we have
\begin{equation}\label{eq:[F:Q(xi)]}
[F:\Q(\xi)]=\vert \Gal(F/\Q(\xi))\vert\geq m[F:\Q(\alpha_1)] =\frac{m}{d'} [F:\Q].
\end{equation}

Since $[\Q(\xi):\Q]=d$, \eqref{eq:[F:Q(xi)]} implies
$m\leq d'/d$. This means $\alpha_1$ has at least $d'(d-1)/d$ many Galois conjugates with modulus less than
$M^{-(1/(d-1)+\delta)}$. Combining with the fact that all Galois conjugates of $\alpha_1$ have modulus at most $M$, we have:
$$
\vert \Norm_{\Q(\alpha_1)/\Q}(\alpha_1)\vert\leq M^{d'/d} M^{-(1/(d-1)+\delta)d'(d-1)/d}<1,
$$
since $M>1$ and $\delta > 0$.  
This contradicts the fact that $\alpha_1$ is a non-zero algebraic integer and we finish the proof.
\end{proof}

\section{Proof of Theorem~\ref{EM} and further discussion on the work of Erd\H os and Mahler.}   \label{sec:4}

\begin{proof}[Proof of Theorem~\ref{EM}]
We assume that $\theta$ is not a Liouville number, that is, we assume that $\mu$ is finite. Define
$$
Q_k = q_{k-1} q_k q_{k+1}, \quad k \ge 2. 
$$
Let $S$ be a finite set of prime numbers.
Write $\theta = [a_0 ; a_1, a_2 , \ldots ]$ and recall that 
$$
q_{k+1} = a_{k+1} q_k + q_{k-1}, \quad k \ge 2. 
$$
Let $k \ge 2$ and set $d_k = \gcd(q_{k-1}, q_{k+1})$. Since $q_{k-1}$ and $q_k$ are coprime, we see that $d_k$ 
divides $a_{k+1}$. Define
$$
q_{k-1}^* = q_{k-1} / d_k, \quad q_{k+1}^* = q_{k+1} / d_k, \quad a_{k+1}^* = a_{k+1} / d_k.
$$
Then, we have
$$
q_{k+1}^* = a_{k+1}^* q_k + q_{k-1}^*, \quad k \ge 2. 
$$
Let $\eps > 0$. 
By the Schmidt Subspace Theorem, the set of points $(q_{k-1}^*, q_{k+1}^*)$ such that 
$$
q_{k-1}^* q_{k+1}^* \, \prod_{p \in S} |q_{k-1}^* q_{k+1}^* (q_{k+1}^* - q_{k-1}^*)|_p < (q_{k+1}^* )^{- \eps}
$$
is contained in a union of finitely many proper subspaces. 
Since $q_{k-1}^*$ and $q_{k+1}^*$ are coprime, this set is finite. 
We deduce that, for $k$ large enough, we get
$$
\prod_{p \in S} |q_{k-1}^* q_{k+1}^* (q_{k+1}^* - q_{k-1}^*)|_p > 
(q_{k-1}^* q_{k+1}^*)^{-1}  (q_{k+1}^* )^{- \eps},
$$
thus
$$
\prod_{p \in S} |q_{k-1} q_{k+1} (a_{k+1}^* q_{k})|_p > 
(q_{k-1} q_{k+1})^{-1}  (q_{k+1}^* )^{- \eps},
$$
hence
$$
\prod_{p \in S} |q_{k-1} q_{k+1} q_{k}|_p > 
(q_{k-1} q_{k+1})^{-1}  (q_{k+1} )^{- \eps}. 
$$
Recalling that $q_{k-1} < q_k$ and $q_{k+1} < q_k^{\mu - 1 + \eps}$ for $k$ large enough, we get 
$$
[Q_k]_S < q_{k-1} q_{k+1}^{1 + \eps} < q_k^{\mu  + \eps} Q_k^{\eps}
$$
Since
$$
Q_k < q_k^2 q_{k+1} <  q_k^{\mu + 1 + \eps},
$$
we get
$$
[Q_k]_S  <  Q_k^{(\mu  + \eps)/(\mu + 1 + \eps)} Q_k^{\eps}. 
$$
This proves \eqref{eq:EM for q}. The last assertion
can be proved in the same manner thanks to the identity
$p_{k+1}= a_{k+1}p_k + p_{k-1}$ and the inequalities
$\vert p_{k-1}\vert<\vert p_k\vert$
and $\vert p_{k+1}\vert<\vert p_k\vert^{\mu-1+\varepsilon}$
for large $k$.
\end{proof}

The following was suggested at the end of \cite{ErMa39}:

\begin{question}[Erd\H os-Mahler]\label{q:EM}
Let $\theta$ be an irrational real number such that the largest prime factor of $p_n(\theta)q_n(\theta)$ is bounded for infinitely many $n$. Is it true that $\theta$ is a Liouville number?
\end{question}

Erd\H os and Mahler  
stated the existence of $\theta$ with the given properties in Question~\ref{q:EM} without further details. We provide a construction here for the sake of completeness.

Let $S$ and $T$ be disjoint non-empty sets of prime numbers
such that $S$ has at least two elements.    
We construct uncountably    
many $\theta$ such that for infinitely many $n$ the prime factors of $p_n(\theta)$ belong to $S$ while the prime factors of $q_n(\theta)$ belong to $T$. To simplify the notation, we consider the case $S=\{2,3\}$ and $T=\{5\}$. The construction for general $S$ and $T$ follows the same method. The constructed numbers $\theta$ have the form
$$\theta=\sum_{i=1}^{+ \infty}\frac{a_i}{5^{3^i}}.$$

Let $\Gamma$ be the set of positive integers with only prime factors in $\{2,3\}$. For every positive integer $m$, let $\gamma(m)$ denote the smallest element of $\Gamma$ that is greater than $m$. Let
$f(m):=\displaystyle\frac{\gamma(m)-m}{m}$.
By \cite{Tij74}, we have
\begin{equation}
\lim_{m \to + \infty} f(m) = 0.
\end{equation}

First, we construct the sequence of positive integers $s(1)<s(2)<\ldots$ recursively:
\begin{itemize}
\item $s(1)=1$.
\item After having $s(1),\ldots,s(k)$, let $N_k$ be a positive integer depending on $s(k)$ such that
\begin{equation}\label{eq:N_k}
f(m)<\frac{1}{5^{3^{s(k)+1}}}, \ \ \text{for}\ m\geq N_k.
\end{equation}
Then we choose $s(k+1)$ so that:
\begin{equation}\label{eq:s(k+1)}
5^{2\cdot 3^{s(k+1)-1}}\geq N_k\ \text{and}\ s(k+1)>s(k)+1.
\end{equation}
\end{itemize}

Now we construct the $a_i$'s:
\begin{itemize}
\item $a_1=1$.
\item Choose arbitrary $a_i\in\{1,2\}$ for $i\notin\{s(1),s(2),\ldots\}$. Since $s(k+1)>s(k)+1$ for every $k$, the set
$\mathbb{N}\setminus \{s(1),s(2),\ldots\}$ is infinite. Hence there are uncountably   
many choices here.
\item Since $s(1)=1$, we already had $a_{s(1)}$. Suppose we have positive integers $a_{s(1)},\ldots,a_{s(k)}$ with the following properties:
\begin{itemize}
	\item [(i)] For $1\leq j\leq k$, we have $\displaystyle\sum_{i=1}^{s(j)}\frac{a_i}{5^{3^i}}=u_{s(j)}/5^{3^{s(j)}}$ with $u_{s(j)}\in \Gamma$.
	
\smallskip	
	
	\item [(ii)] For $2\leq j\leq k$, we have $\displaystyle\frac{a_{s(j)}}{5^{3^{s(j)}}}< \frac{1}{5^{3^{s(j-1)+1}}}$.
\end{itemize}
We now define $a_{s(k+1)}$ so that the above two properties continue to hold with $j=k+1$ as well. Thanks to property (ii) and the fact that $a_i\leq 2$ for $i\notin\{s(1),s(2),\ldots\}$, we have the rough estimate:
$$\frac{u}{5^{3^{s(k+1)-1}}}:=\sum_{i=1}^{s(k+1)-1} \frac{a_i}{5^{3^i}}\leq \sum_{i=1}^{s(k+1)-1} \frac{2}{5^{3^i}}+\sum_{j=1}^{k-1}\frac{1}{5^{3^{s(j)+1}}}<1$$ 
and hence $u<5^{3^{s(k+1)-1}}$. From
$$\sum_{i=1}^{s(k+1)}\frac{a_i}{5^{3^i}}=\frac{u}{5^{3^{s(k+1)-1}}}+\frac{a_{s(k+1)}}{5^{3^{s(k+1)}}}=\frac{u\cdot 5^{2\cdot 3^{s(k+1)-1}}+a_{s(k+1)}}{5^{3^{s(k+1)}}},$$ 
 we now define $a_{s(k+1)}=\gamma(u\cdot 5^{2\cdot 3^{s(k+1)-1}})-u\cdot 5^{2\cdot 3^{s(k+1)-1}}$. Recall that $\gamma(u\cdot 5^{2\cdot 3^{s(k+1)-1}})$ is the smallest element of $\Gamma$ that is greater than
 $u\cdot 5^{2\cdot 3^{s(k+1)-1}}$. This verifies property (i) for $j=k+1$. To verify (ii) for $j=k+1$, we have:
 \begin{align*}
 \frac{a_{s(k+1)}}{5^{3^{s(k+1)}}}&=\frac{\gamma(u\cdot 5^{2\cdot 3^{s(k+1)-1}})-u\cdot 5^{2\cdot 3^{s(k+1)-1}}}{u\cdot 5^{2\cdot 3^{s(k+1)-1}}}\cdot \frac{u\cdot 5^{2\cdot 3^{s(k+1)-1}}}{5^{3^{s(k+1)}}}\\
 &<\frac{\gamma(u\cdot 5^{2\cdot 3^{s(k+1)-1}})-u\cdot 5^{2\cdot 3^{s(k+1)-1}}}{u\cdot 5^{2\cdot 3^{s(k+1)-1}}}\ \text{since}\ u<5^{3^{s(k+1)-1}}\\
 &=f(u\cdot 5^{2\cdot 3^{s(k+1)-1}})\\
 &<\frac{1}{5^{3^{s(k)+1}}}, \ \text{by \eqref{eq:N_k} and \eqref{eq:s(k+1)}}.  
 \end{align*} 
By the principle of recursive definition, we have $a_i$ for $i\in\{s(1),s(2),\ldots\}$ such that 
property (i) holds for every $j\geq 1$ and property (ii) holds for every $j\geq 2$.
\end{itemize}
Write $\displaystyle u_n/v_n=\sum_{i\leq n}a_i/5^{3^i}$
with $v_n=5^{3^n}$. We have:
\begin{align*}
\vert \theta-u_{s(k)}/v_{s(k)}\vert &=\sum_{i=s(k)+1}^{\infty}\frac{a_i}{5^{3^i}}\\
&< \sum_{i=s(k)+1}^{\infty}\frac{2}{5^{3^i}}+\sum_{j=k}^{\infty}\frac{1}{5^{3^{s(j)+1}}}\\
&<\frac{4}{5^{3^{s(k)+1}}}=\frac{4}{v_{s(k)}^3}. 
\end{align*}
Therefore the $u_{s(k)}/v_{s(k)}$ are among the convergents to $\theta$.

It is not clear to us whether the above numbers $\theta$ are always Liouville numbers. 
However, we suspect that this is the case. 
In order to construct Liouville numbers, we can use a similar method for numbers of the form:
$$\sum_{i=1}^{+ \infty}\frac{b_i}{5^{i!}}.$$

\end{document}